\documentclass[oneside, a4paper]{amsart}
	\usepackage[a4paper, margin=0.9in]{geometry}
	\usepackage{amsmath, amsthm, amssymb, latexsym, mathtools, mathrsfs, eucal}
	\usepackage{dsfont, soul, stackrel, tikz-cd, graphicx, etoolbox, multirow}
	\DeclareGraphicsExtensions{.pdf,.png,.jpg}
	\usepackage{fancyhdr}
	\usepackage[shortlabels]{enumitem}
	\usepackage[pdfmenubar=true, pdfborder={0 0 0 [3 3]}, colorlinks, citecolor=magenta]{hyperref}
	\numberwithin{equation}{section}
	\setenumerate{listparindent=\parindent}

	\newtheoremstyle{Mytheorem}%
	{1em}{1em}%
	{\slshape}{}%
	{\bfseries}{.}%
	{ }{}
 
	\newtheoremstyle{Mydefinition}%
	{1em}{1em}%
	{}{}%
	{\bfseries}{.}%
	{ }{}

	\theoremstyle{Mydefinition}
	\newtheorem{statement}{Statement}[section]
	\newtheorem{definition}[statement]{Definition}
	
	\newtheorem{remark}[statement]{Remark}
	
	\newtheorem{example}[statement]{Example}

	\newtheorem*{comment*}{Comment}

	\theoremstyle{Mytheorem}
	\newtheorem{theorem}[statement]{Theorem}
	\newtheorem{corollary}[statement]{Corollary}
	
	\newtheorem{proposition}[statement]{Proposition}
	\newtheorem{lemma}[statement]{Lemma}

	\newcommand{\G}{GL_2^{+}(\mathbb{Q})}

	\newcommand{\pa}[1]{\frac{\partial}{\partial{#1}}}
	\newcommand{\prt}[2]{\frac{\partial{#1}}{\partial{#2}}}

	\newcommand{\nc}{\newcommand}
	\newcommand{\be}{\begin{eqnarray*}}
	\newcommand{\ee}{\end{eqnarray*}}
	\newcommand{\bea}{\begin{eqnarray}}
	\newcommand{\eea}{\end{eqnarray}}
	\newcommand{\bs}{\begin{split}}
	\newcommand{\es}{\end{split}}
	\newcommand{\bal}{\begin{align}}
	\newcommand{\eal}{\end{align}}

	\nc{\bei}{\begin{itemize}}
	\nc{\eei}{\end{itemize}}
	\nc{\bee}{\begin{enumerate}}
	\nc{\eee}{\end{enumerate}}
	\nc{\bet}{\begin{thm}}
	\nc{\eet}{\end{thm}}
	\nc{\bed}{\begin{defn}}
	\nc{\eed}{\end{defn}}
	\nc{\bel}{\begin{lem}}
	\nc{\eel}{\end{lem}}
	\nc{\bep}{\begin{prop}}
	\nc{\eep}{\end{prop}}
	\nc{\bec}{\begin{corollary}}
	\nc{\eec}{\end{corollary}}
	\nc{\ber}{\begin{rem}}
	\nc{\eer}{\end{rem}}
	\nc{\beex}{\begin{example}}
	\nc{\eeex}{\end{example}}
	\nc{\bpm}{\begin{pmatrix}}
	\nc{\epm}{\end{pmatrix}}
	\nc{\bspm}{\left(\begin{smallmatrix}}
	\nc{\espm}{\end{smallmatrix}\right)}

	\newcommand{\cA}{\mathcal{A}}
	\newcommand{\cB}{\mathcal{B}}
	\newcommand{\cC}{\mathcal{C}}

	\newcommand{\cO}{\mathcal{O}}

	\newcommand{\cT}{\mathcal{T}}

	\newcommand{\bZ}{\mathbb{Z}}

	\nc{\frf}{\mathfrak{f}}

	\nc{\frs}{\mathfrak{s}}  
	\nc{\frt}{\mathfrak{t}} 
	\nc{\fru}{\mathfrak{u}}
	\nc{\lsl}{\mathfrak{sl}}
	\nc{\lgl}{\mathfrak{gl}}
	\nc{\upsi}{\underline{\psi}}
	\nc{\uchi}{\underline{\chi}}
	 
	\DeclareMathOperator{\Lie}{Lie}

	\newcommand{\lra}{\longrightarrow}    
	
	\nc{\surjto}{\twoheadrightarrow}
	\nc{\ts}{\times}
	\nc{\ds}{\displaystyle}
	\nc{\nd}{\noindent}  
	\nc{\ud}{\underline}
	\nc{\ov}{\overline}
	\nc{\maplra}[1]{\buildrel #1 \over \lra}
	\nc{\mapto}[1]{\buildrel #1 \over \to}
	\nc{\setb}[1]{\{  #1\}}
	\nc{\cHom}{\mathcal{H}om}

	\def\a{\alpha}
	\def\b{\beta}

	\def\g{\gamma} \def\G{\Gamma}

	\def\m{\mu}

	 \def\O{\Omega}

	\def\C{\mathbb{C}}
	
	\def\Z{\mathbb{Z}}

	\def\ch{\hbox{\it ch}}

	\newcommand{\downtriangleBase}{%
		\begin{tikzpicture}%
			\draw[line cap=round,-] (0,1.6ex) -- (1.7ex,1.6ex);%
			\draw[line cap=round,-] (0,1.6ex) -- (0.85ex,0);%
			\draw[line cap=round,-] (0.85ex,0) -- (1.7ex,1.6ex);%
		\end{tikzpicture}%
	}
	\DeclareMathOperator{\downtriangle}{\downtriangleBase}

\begin{document}
\title[A perturbative algorithm for flat F-manifolds associated with LG models] {A perturbative algorithm for flat F-manifolds associated with Landau-Ginzburg models}

\author{Jeehoon Park}
\address{Jeehoon Park: QSMS, Seoul National University, 1 Gwanak-ro, Gwanak-gu, Seoul 08826, South Korea}
\email{jpark.math@gmail.com}

\author{Jaewon Yoo}
\address{Department of Mathematics, POSTECH (Pohang University of Science and Technology), San 31, Hyoja-Dong, Nam-Gu, Pohang, Gyeongbuk, 790-784, South Korea. }
\email{yooj1215@postech.ac.kr}

\subjclass[2010]{14J32, 32S25, 14F25 (primary), 14J81, 14J33, 81T70 (secondary). }
\keywords{dGBV algebra, flat $F$-manifold, hypersurface singularity, Landau-Ginzburg model, projective smooth complete intersections
}

\begin{abstract}

We develop a perturbative algorithm for constructing formal flat $F$-manifold structures on the cohomologies of dGBV (differential Gerstenhaber-Batalin-Vilkovisky) algebras associated with Landau-Ginzburg models. As an application, this approach provides a perturbative construction of formal flat $F$-manifold structures on two important objects: the Jacobian algebra of a homogeneous polynomial with an isolated singularity at the origin, and the primitive cohomology of smooth projective Calabi-Yau complete intersections.

\end{abstract}

\maketitle
\tableofcontents


\section{Introduction}

Let $M$ be a complex manifold with the structure sheaf $\cO_M$. A Frobenius manifold structure on $M$ is a commutative algebra structure on the holomorphic tangent bundle $\mathcal{T}_M$ with a metric satisfying certain compatibility conditions: see Definition \ref{Def_Frob}. Such a structure was first axiomatized by Dubrovin in \cite{Dubrovin} and its first example was given by K. Saito (the universal unfolding of an isolated hypersurface singularity) in \cite{Saito}, which was further studied by M. Saito in \cite{MSaito}. It also plays an important role in formulating the mirror symmetry conjecture: for example, see \cite{BK} and \cite{CS99}. 
Hertling and Manin introduced weaker but still very useful notions: an $F$-manifold (\cite{HM}, \cite[Chapter 2]{Her02}), which is essentially a Frobenius manifold structure without a metric, and a flat $F$-manifold (\cite{Ma05}), which is an $F$-manifold with a flat structure.

One can also think of formal versions of Frobenius manifold structures and flat $F$-manifold structures. In this paper, we will provide a new construction of formal flat $F$-manifold structures on the cohomology of a dGBV (differential Gerstenhaber-Batalin-Vilkovisky) algebra associated to the LG (Landau-Ginzburg) model.
Our construction is explicitly perturbative and algorithmic, so it can be implemented in a computer program using the Gr\"obner basis of the polynomial ring. 

Let us
\begin{equation*}
    A:=\C[\ud x]=\C[x_1, \ldots, x_n]
\end{equation*}
be a polynomial ring and $S=S(\ud x)\in A$ be an arbitrary polynomial.
Then the triple $(\C^n, \O, S)$, where $\O=dx_1 \wedge \cdots \wedge dx_n$ is a holomorphic volume form on the non-compact complex manifold $\C^n$ (non-compact Calabi-Yau manifold), is called the (algebraic) Landau-Ginzburg $B$-model.
There is a natural dGBV (differential Gerstenhaber-Batalin-Vilkovisky) algebra (see Definition \ref{bvd}) associated to $(\C^n, \O, S)$: 
\begin{equation*}
    (\cA, \delta_S, \Delta)
\end{equation*}
where
$\cA:=\mathrm{PV}_{alg}(\C^n)$ is the space of algebraic polyvector fields on $\C^n$, $\delta_S$ is the twist by the holomorphic 1-form $dS$, and $\Delta$ is the divergence operator with respect to $\O$:
\begin{equation}\label{dgbv}
	\begin{aligned}
	\mathcal{A}&= {\mathbb{C}}[\underline{x}][\underline{\eta}]={\mathbb{C}}[x_1,\ldots,x_n][\eta_{1},\ldots,\eta_n], \quad \eta_i = \frac{\partial}{\partial x_i}\\
	\delta_S&={\{S, -\}}=\sum_{i=1}^n \prt{S(\underline{x})}{x_i} \pa{\eta_i}: \mathcal{A} \to \mathcal{A},\\
    \Delta&=\sum_{i=1}^n\pa{x_i}\pa{\eta_i}:\mathcal{A}\to \mathcal{A}.
	\end{aligned}
\end{equation}

The additive cohomological $\bZ$-grading of $\mathcal{A}$ is given
by the rules ($|f|=m$ means $f \in \mathcal{A}^m$)
\[
	|x_i|=0, |\eta_i|=-1, \quad i=1, \dots, n;
\]
and we have the following cochain complex\footnote{Note that the super-commutativity means that $a\cdot b=(-1)^{|a| |b|} b \cdot a$ for homogeneous elements $a, b$. Hence, we see that $\eta_i \cdot \eta_j = - \eta_j \cdot \eta_i$, which implies that $\eta_i^2 =0$ and $\mathcal{A}^{-n-1}=\mathcal{A}^{-n-2}=\cdots=0$. }
\[
	\cA=\bigoplus_{k=-n}^0 \cA^k, \quad 0 \to \mathcal{A}^{-n} \maplra{\delta_S} \mathcal{A}^{-n+1} \maplra{\delta_S} \cdots \maplra{\delta_S} \mathcal{A}^0=A \to 0.
\]

Note that the cohomology $H(\cA, \delta_S)$ of $(\cA,\delta_S)$ has an induced super-commutative algebra structure from $\cA$, since $\delta_S$ is a derivation.
Let $J_S$ be any finite-dimensional (as $\C$-vector spaces) subalgebra of $H^0(\cA, \delta_S)$ and we assume that
\begin{equation}\label{rassume}
    J_S = \cB^0/\delta_S(\cB^{-1}), \quad \cB= \bigoplus_i \cB^i
\end{equation}
where $\cB$ is a super-commutative subalgebra of $\cA$ and $(\cB, \delta_S, \Delta)$ is again a dGBV algebra.
We would like to provide a perturbative algorithm for formal flat $F$-manifold structures on $J_S$.

\begin{example}
    We give two key examples:
    \begin{enumerate}
        \item Assume that $S(\ud x)$ has only an isolated singularity at zero (this is called the isolated hypersurface singularity). Then we let
        \begin{equation}
            J_S:= H^0(\cA, \delta_S),
        \end{equation}
        which is finite dimensional by the property that the singularities are isolated. Moreover, it is well-known (for example, see \cite{LLS}) that $H^0(\cA, \delta_S)= H(\cA, \delta_S)$. In other words, we have
        \begin{equation*}
            J_S = A/ \mathrm{Jac}(S) = H(\cA, \delta_S)
        \end{equation*}
        where $\mathrm{Jac}(S)$ is the Jacobian ideal of $S$.
        \item Let $\mathbf{P}^N$ be a $N$-dimensional projective space over $\mathbb{C}$ for $N\geq 1$. Denote by $\mathbb{C}[\underline{z}]$ the usual homogeneous coordinate ring of $\mathbf{P}^N$ with $\underline{z} = (z_0, z_1, \dots, z_n)$. For $N\geq k \geq 1$, let $G_1(\underline{z}), \dots, G_k(\underline{z})$ be homogeneous polynomials of degree $d_1, \dots, d_k$ respectively. We consider a smooth projective variety $X_{\underline{G}}$ embedded in $\mathbf{P}^N$ defined by $G_1(\underline{z}), \dots, G_k(\underline{z})$, which satisfies the complete intersection property, i.e. $\dim X_{\underline{G}} = N-k$. We further assume that $X$ is Calabi-Yau:
        \begin{equation*}
            N+1 = \sum_{i=1}^k d_k.
        \end{equation*}
Let $X=X_{\underline{G}}(\mathbb{C})$ be the complex analytic manifold associated to $X_{\underline{G}}$ and consider the Dwork potential
\begin{equation}\label{DP}
	S(\underline{y},\underline{z}) := \sum_{\ell=1}^k y_\ell \cdot G_\ell(\underline{z}),
\end{equation}
where we introduce the formal variables $y_1,\dots,y_k$ corresponding to $G_1,\dots,G_k$. Let $n=N+k+1$, $x_1=y_1,\dots,x_k=y_k,x_{k+1}=z_0,\dots,x_n=z_N$. 
For each  $x_\m$, assign a non-zero integer $\ch(x_\m)$ called
charge of $x_\m$ as follows:
\[
	\ch(x_i)=\ch(y_i)=-d_i, \quad \text{for } i=1, \dots, k, \quad \ch(x_i)=\ch(z_{i-k-1}) = 1, \quad \text{for }  i=k+1, \dots, n. 
\]
Also  assign $ \ch(\eta_\m):=-\ch(x_\m)$.
Then $\cA_0$, the charge zero component of $\cA$, is a super-commutative subalgebra of $\cA$ and $(\cA_0, \delta_S, \Delta)$ is a dGBV algebra.
Let
\begin{equation}
    J_S:=H^0(\cA_0, \delta_S) \subset H^0(\cA, \delta_S).
\end{equation}
Then it is well-known (\cite{Gr69}, \cite{Ko91}, and \cite{Dim95}) that $J_S$ is isomorphic to the primitive (middle-dimensional) cohomology of $X$. In particular, $J_S$ is finite dimensional over $\C$.
    \end{enumerate}
\end{example}

\textbf{Acknowledgement:}
The work of Jeehoon Park was supported by the National Research Foundation of Korea (NRF-2021R1A2C1006696) and the National Research Foundation of Korea (NRF) grant funded by the Korea government (MSIT) (No.2020R1A5A1016126).

The first author would like to thank Jae-Suk Park for helping him initiating this project and providing useful comments and warm support. He is also grateful to Younggi Lee and Jaehyun Yim for helpful discussions on the subject. 




\section{DGBV algebras and flat $F$-manifolds}

 We briefly review the definitions of dGBV-algebras.

\begin{definition}\label{bvd}
	Let $k$ be a field. Let $(\cC,\cdot)$ be a unital $\Z$-graded super-commutative and associative $k$-algebra.
	Let $[\cdot, \cdot]: \cC \otimes \cC \to \cC$ be a bilinear map of degree 1.
	\begin{enumerate}[(a)]
		\item $(\cC, \cdot, [\cdot, \cdot])$ is called a G-algebra (Gerstenhaber algebra) over $k$ if
		\begin{align*}
			\quad [a, b] &= (-1)^{|a||b|}[b,a],\\
			[a, [b,c]]&= (-1)^{|a|+1}[[a,b],c]+(-1)^{(|a|+1)(|b|+1)} [b, [a,c]],\\
			\quad [a,b \cdot c]&= [a, b] \cdot c +(-1)^{(|a|+1)\cdot |b|} b \cdot [a,c],
		\end{align*}
		for any homogeneous elements $a, b, c \in \cC.$
		\item  $(\cC,\cdot, \Delta)$ is called a BV algebra, 
		if $(\cC, \Delta, \ell_2^{\Delta})$ is a shifted DGLA(differential graded Lie algebra), where
		\begin{align}\label{elltwo}
			\ell_2^{\Delta}(a,b):= \Delta(a \cdot b)-\Delta(a)\cdot b -(-1)^{|a|} a\cdot \Delta(b), \quad a,b \in \cC,
		\end{align} 
        and $(\cC, \cdot, \ell_2^{\Delta})$ is a G-algebra, 
		\item $(\cC, \cdot, \Delta, \delta)$, where $\delta:\cC \to \cC$ is a linear map of degree 1, is called a dGBV(differential Gerstenhaber-Batalin-Vilkovisky) algebra if $(\cC, \cdot, \Delta,  \ell_2^{\Delta})$ is a GBV algebra
		and $(\cC,\cdot,\delta)$ is a cdga(commutative differential graded algebra), i.e.
		\[
			\delta^2=0, \quad \delta(a \cdot b) = \delta(a) \cdot b + (-1)^{|a|} a \cdot \delta(b), \quad a,b \in \cC,
		\]
		 and $(\delta+\Delta)^2=0$.
	 \end{enumerate}
\end{definition}

We now review the notions of flat $F$-manifolds and Frobenius manifolds.
Let $M$ be a connected complex manifold with the holomorphic structure sheaf $\cO_M$. Let $\cT_M$ be the holomorphic tangent sheaf.
A $(k,l)$-tensor means an $\cO_M$-linear map $T: \cT_M^{\otimes k} \to \cT_M^{\otimes l}$.
The Lie derivative $\Lie_X$ along a vector field $X$ is a derivation on the sheaf of $(k,l)$-tensors, as well as the covariant derivative $\nabla_X T$ with respect to a connection $\nabla$ on $M$:
\be
&&\Lie_X (Y_1 \otimes \ldots \otimes Y_l) = \sum_i Y_1 \otimes \ldots \Lie_X(Y_i) \ldots \otimes Y_l, \\
&&
\nabla_X (Y_1 \otimes \ldots \otimes Y_l) = \sum_i Y_1 \otimes \ldots \nabla_X(Y_i) \ldots \otimes Y_l 
\ee
for local vector fields $Y_1, \ldots, Y_l$, and
\be
(\Lie_X T)(Y) = \Lie_X (T(Y)) - T(\Lie_X(Y)), \quad
(\nabla_X T)(Y) = \nabla_X (T(Y)) - T(\nabla_X(Y))
\ee
for $(k,l)$-tensors $T$. Note that $\Lie_X(f)=X(f)$ for functions $f$ and $\Lie_X Y= [X,Y]$ for vector fields $X,Y$.
Thus the Lie derivative $\Lie_X T$ of a $(k,l)$-tensor $T$ along a vector field $X$ is again a $(k,l)$-tensor, as well as the covariant derivative $\nabla_X T$ with respect to a connection $\nabla$ on $M$.
Then $\nabla T$ can be viewed as a $(k+1,l)$-tensor.

\begin{definition} [$F$-manifolds]
An $F$-manifold is a triple $(M, \circ, e)$ where $\circ$ is a commutative and associative $\cO_M$-bilinear multiplication $\cT_M \times \cT_M \to \cT_M$, $e$ is a global unit vector field with respect to $\circ$, and the multiplication satisfies
\bea \label{F-pot}
\Lie_{X\circ Y}(\circ) = X \circ \Lie_Y(\circ) + Y\circ \Lie_X(\circ)
\eea
for any local vector fields $X,Y$. 
\end{definition}
The condition \eqref{F-pot} is equivalent to
		\begin{multline}\label{fmfd}
			[X \circ Y, Z \circ W] - [X \circ Y,Z] \circ W - Z \circ [X \circ Y,W] - X \circ [Y, Z \circ W] + X \circ [Y,Z] \circ W\\
			+ X \circ Z \circ [Y,W] - Y \circ [X, Z \circ W] + Y \circ [X,Z] \circ W + Y \circ Z \circ [X,W] = 0
		\end{multline}
for any local vector fields $X,Y,Z,W$.

\begin{definition}[flat $F$-manifolds] \label{ffm}
A flat $F$-manifold is a quadruple $(M,\circ, e, \nabla)$ where $\circ$ is a commutative and associative $\cO_M$-bilinear multiplication $\cT_M \times \cT_M \to \cT_M$, $e$ is a global unit vector field with respect to $\circ$, $\nabla$ is a flat connection on $\cT_M$ which satisfies $\nabla e=0$, and $\circ$ is compatible with $\nabla$, i.e. each element of the pencil $(\nabla^z)_{z\in \C}$, defined by $\nabla_X^z(Y) = \nabla_X Y + z X\circ Y$ is torsion-free and flat:
\be
&& \nabla^z_{X}Y -\nabla^z_{Y}X=[X, Y] \quad (\text{torsion-free}),\\
&&\nabla^z_{X} \nabla^z_{Y} -\nabla^z_{Y} \nabla^z_{X} =\nabla^z_{[X, Y]} \quad (\text{flat}).
\ee
\end{definition}
These axioms imply an existence of a vector potential in $\nabla$-flat coordinates: in $\nabla$-flat local coordinates $t_M=\{t^0, \ldots, t^{\mu-1}\}$ on $M$ with $\mu=\dim M$, there exists a vector potential $(F^0, \ldots, F^{\mu-1})$ such that
\be
\frac{\partial}{\partial t^\a} \circ \frac{\partial}{\partial t^\b} = \sum_{\g} \frac{\partial^2 F^\g}{\partial t^\a \partial t^\b} \frac{\partial}{\partial t^\g}, \quad \a, \b = 0, \ldots, \mu-1.
\ee

\begin{definition}[Frobenius manifolds]\label{Def_Frob}
A Frobenius manifold is a tuple $(M, \circ, e, g)$ where $g$ is a metric on $M$, $\circ$ is a commutative and associative $\cO_M$-bilinear multiplication $\cT_M \times \cT_M \to \cT_M$, and $e$ is a global unit vector field with respect to $\circ$ subject to the following conditions:
\begin{enumerate}[(1)]
\item (invariance) $g(X\circ Y, Z) = g(X, Y\circ Z)$
\item (potentiality) the (3,1)-tensor $\nabla^g \circ$ is symmetric where $\nabla^g$ is the Levi-Civita connection of $g$,
\item the metric $g$ is flat, i.e. $\nabla^g$ is a flat connection, $[\nabla^g_X,\nabla^g_Y]=\nabla^g_{[X,Y]}$,
\item $\nabla^g e=0$.
\end{enumerate}
\end{definition}

The potentiality condition (2) written out for arbitrary local fields $X,Y,Z$ is
\be
\nabla^g_X(Y\circ Z) - Y \circ \nabla_X^g(Z) - \nabla^g_Y (X \circ Z) + X \circ \nabla^g_Y(Z) - [X,Y]\circ Z=0.
\ee

If $\ud t=\{t^\a\}$ is a $\nabla^g$-flat coordinate on $M$, then there is a potential $F(\ud t)$ such that
\be
\partial_\a \partial_\b \partial_\g F(\ud t) = g(\partial_\a \circ \partial_\b, \partial_\g), \quad
\text{ where } \partial_\a =\frac{\partial}{\partial t^\a}.
\ee
The following proposition is well-known (\cite{Her02}).
\begin{proposition}
If $(M, \circ, e,g)$ is a Frobenius manifold, then $(M, \circ, e, \nabla^g)$ is a flat $F$-manifold.
If $(M,\circ, e, \nabla)$ is a flat $F$-manifold, then $(M, \circ, e)$ is an $F$-manifold.
\end{proposition}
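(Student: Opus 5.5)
The proposition has two parts, and I would prove them separately by local computations in coordinates.

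\emph{Frobenius $\Rightarrow$ flat $F$.} Put $\nabla=\nabla^g$ and $\nabla^z_X Y=\nabla_X Y+zX\circ Y$. Flatness of $\nabla$ and $\nabla e=0$ are axioms (3) and (4), so only torsion-freeness and flatness of the pencil need checking.

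Torsion-freeness is immediate. The Levi-Civita connection is torsion-free, and $z(X\circ Y-Y\circ X)=0$ by commutativity.

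For flatness, expand $[\nabla^z_X,\nabla^z_Y]-\nabla^z_{[X,Y]}$ in powers of $z$.
\begin{itemize}
\item The $z^0$ term vanishes because $\nabla$ is flat.
\item The $z^2$ term is $X\circ(Y\circ W)-Y\circ(X\circ W)$, which vanishes by associativity and commutativity.
\item The $z^1$ term is
\[
\nabla_X(Y\circ W)-Y\circ\nabla_X W-\nabla_Y(X\circ W)+X\circ\nabla_Y W-[X,Y]\circ W=(\nabla_X\circ)(Y,W)-(\nabla_Y\circ)(X,W),
\]
after using $\nabla_XY-\nabla_YX=[X,Y]$. This is precisely the written-out potentiality condition (2), so it vanishes.
\end{itemize}
Axiom (1) (invariance) is not needed for this direction.

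\emph{Flat $F$ $\Rightarrow$ $F$.} The plan is to use the vector potential. Since $\nabla$ is flat and torsion-free (take $z=0$), there are local flat coordinates $t^\alpha$. The $z^1$ part of flatness of $\nabla^z$ says the tensor $\nabla\circ$ is symmetric, i.e.\ $\partial_\alpha c^\gamma_{\beta\delta}$ is symmetric in $\alpha,\beta$, where $\partial_\alpha\circ\partial_\beta=\sum_\gamma c^\gamma_{\alpha\beta}\partial_\gamma$. Together with the symmetry of $c^\gamma_{\alpha\beta}$ in $\alpha,\beta$, the Poincaré lemma (applied twice) gives locally $c^\gamma_{\alpha\beta}=\partial_\alpha\partial_\beta F^\gamma$.

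Now verify \eqref{F-pot}. Both sides are tensorial in $X,Y$: replacing $X$ by $fX$ changes each side by the same terms, because $\Lie_{fX}(\circ)(A,B)=f\Lie_X(\circ)(A,B)-A(f)\,X\circ B-B(f)\,X\circ A$ and similarly on the right. So it suffices to take $X=\partial_\alpha$, $Y=\partial_\beta$ and evaluate both sides on $\partial_\gamma,\partial_\delta$. Since coordinate fields commute, $\Lie_{\partial_\alpha}(\circ)=\partial_\alpha c$. Writing everything in coordinates, the identity reduces to an algebraic relation among the $c$'s and their first derivatives.

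That relation should follow from two facts: associativity $c^\epsilon_{\alpha\beta}c^\rho_{\epsilon\gamma}=c^\epsilon_{\beta\gamma}c^\rho_{\alpha\epsilon}$, and the total symmetry of $\partial_\alpha c^\rho_{\beta\gamma}=\partial_\alpha\partial_\beta\partial_\gamma F^\rho$. Concretely, differentiating associativity and using total symmetry of third derivatives converts $\Lie_{X\circ Y}(\circ)$ into $X\circ\Lie_Y(\circ)+Y\circ\Lie_X(\circ)$. This is the standard Hertling--Manin argument that potentiality plus associativity implies the $F$-identity.

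The main obstacle is this last index computation: carefully expanding $\Lie_{c^\epsilon_{\alpha\beta}\partial_\epsilon}(\circ)$, which includes the terms coming from derivatives of $c^\epsilon_{\alpha\beta}$, and matching them against the derivative of the associativity relation. I would carry it out by taking $\partial_\epsilon$ of the associativity identity and symmetrizing.
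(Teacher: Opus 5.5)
Your proposal is correct in substance and is the standard argument. The paper gives no proof of this proposition and only cites \cite{Her02}. In the proof of Proposition~\ref{wpfflat} it also uses, by quoting Hertling's theorem, exactly the step your second part establishes: symmetry of $\nabla\circ$ plus associativity implies \eqref{F-pot}. Your first part, expanding the curvature of $\nabla^z$ in powers of $z$, is complete as written. In the second part you never need the vector potential $F^\gamma$; total symmetry of $\partial_\alpha c^\rho_{\beta\gamma}$ is all you use. Hertling's version needs only a torsion-free $\nabla$ with $\nabla\circ$ symmetric, so flat coordinates are a convenience here rather than a necessity.

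There is one slip you must fix, because it sits in the computation you flag as the main obstacle. Since $[fX,V]=f[X,V]-V(f)X$, the correct formula is
\[
\Lie_{fX}(\circ)(A,B)=f\,\Lie_X(\circ)(A,B)-(A\circ B)(f)\,X+A(f)\,X\circ B+B(f)\,X\circ A .
\]
Your version has the last two signs reversed and is missing the term $-(A\circ B)(f)X$.

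Tensoriality survives either way. With the correct formula, both sides change by $-(A\circ B)(f)\,X\circ Y+A(f)\,X\circ Y\circ B+B(f)\,X\circ Y\circ A$, using commutativity and associativity.

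The final identity, however, would fail with your version. Take the two-dimensional Frobenius manifold with unit $\partial_1$ and $\partial_2\circ\partial_2=f'''(t_2)\partial_1$. At $\alpha=\beta=\gamma=\delta=\rho=2$ your version yields $-2f''''=2f''''$.

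With the correct formula, the identity to verify on $(\partial_\gamma,\partial_\delta)$ is (summing over $\epsilon$)
\[
c^\epsilon_{\alpha\beta}\,\partial_\epsilon c^\rho_{\gamma\delta}-c^\epsilon_{\gamma\delta}\,\partial_\epsilon c^\rho_{\alpha\beta}+c^\rho_{\epsilon\delta}\,\partial_\gamma c^\epsilon_{\alpha\beta}+c^\rho_{\epsilon\gamma}\,\partial_\delta c^\epsilon_{\alpha\beta}
=c^\rho_{\alpha\epsilon}\,\partial_\beta c^\epsilon_{\gamma\delta}+c^\rho_{\beta\epsilon}\,\partial_\alpha c^\epsilon_{\gamma\delta}.
\]
It closes quickly along the lines you suggest:

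\emph{Setup.} Put $T_{\alpha\beta\gamma;\delta}:=\partial_\delta\bigl(\sum_\epsilon c^\epsilon_{\alpha\beta}c^\rho_{\epsilon\gamma}\bigr)$. This is symmetric in $\alpha,\beta,\gamma$ by associativity.

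\emph{Rewrite the left side.} By total symmetry of $\partial c$, the left side equals $T_{\alpha\beta\gamma;\delta}-T_{\gamma\delta\alpha;\beta}+c^\rho_{\epsilon\alpha}\partial_\beta c^\epsilon_{\gamma\delta}+c^\rho_{\epsilon\delta}\partial_\alpha c^\epsilon_{\beta\gamma}$.

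\emph{Simplify the $T$-difference.} We have $T_{\alpha\beta\gamma;\delta}-T_{\gamma\delta\alpha;\beta}=T_{\alpha\gamma\beta;\delta}-T_{\alpha\gamma\delta;\beta}=c^\rho_{\epsilon\beta}\partial_\delta c^\epsilon_{\alpha\gamma}-c^\rho_{\epsilon\delta}\partial_\beta c^\epsilon_{\alpha\gamma}$.

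\emph{Conclude.} The two $c^\rho_{\epsilon\delta}$ terms cancel by total symmetry of $\partial c$. What remains is the right side.
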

One can similarly define formal versions of $F$-manifolds, flat $F$-manifolds, and Frobenius manifolds: we can consider the formal structure sheaf and the formal tangent bundle instead of the holomorphic structure sheaf and the holomorphic tangent bundle.

\section{Flat $F$-manifolds and differential equations}

In \cite{ST}, Saito-Takahashi explained how to deduce a Frobenius manifold structure from a primitive form.
Recall \eqref{rassume} and let $\ud t=\{t^\alpha:\alpha\in I\}$ be a coordinate system of the affine manifold $J_S=\cB^0/\delta_S(\cB^{-1}) \subset H^0(\cA, \delta_S)$. Here $I$ is a finite index set.
We choose a $\C$-basis $\{[u_\alpha]=u_\a + \delta_S(\cB^{-1}):u_\a \in \cB^0, \alpha\in I\}$ of $J_S$ and assign an additive cohomological grading 
\begin{equation}
    |t^\a|=0, \quad \a \in I; \quad \cB[[\ud t]]=\bigoplus_{i} (\cB[[\ud t]])^i=\bigoplus_i \cB^i[[\ud t]].
\end{equation}

Then, for any $\Gamma\in \cB^0[\![\underline{t}]\!]$, the triple $(\cB[[\ud t]], \delta_{S+\G}, \Delta)$ is again a dGBV algebra over $\C[[\ud t]]$, where
\[
\delta_{S+\G}={\{S+\G, -\}}=\sum_{i=1}^n \prt{(S+\G)}{x_i} \pa{\eta_i}: \mathcal{B}[[\ud t]] \to \mathcal{B}[[\ud t]]
\]
From now on we assume that $\Gamma\in \cB^0[\![\underline{t}]\!]$ satisfies $\partial_\alpha\Gamma|_{\underline{t}=0}=u_\alpha$ for all $\alpha\in I$.

\begin{definition}\label{H-module}
	For $\Gamma\in \cB^0[\![\ud t]\!]$ and a formal variable $\hbar$ with $|\hbar|=0$, we define 
	\[
		\mathcal{H}_{S+\Gamma} :=\frac{\cB^0[\![\underline{t}]\!](\!(\hbar)\!)}{(\delta_{S+\Gamma}+\hbar\Delta)(\cB^{-1}[\![\underline{t}]\!](\!(\hbar)\!))},\quad\mathcal{H}_{S+\Gamma}^{(m)}:=\frac{\cB^0[\![\underline{t}]\!][\![\hbar]\!]\hbar^{-m}}{(\delta_{S+\Gamma}+\hbar\Delta)(\cB^{-1}[\![\underline{t}]\!][\![\hbar]\!]\hbar^{-m})}
\]
	for $m\in\mathbb{Z}$. 
\end{definition}

We define the Gauss-Manin connection on $\mathcal{H}_{S+\Gamma}$ following \cite[Definition 4.4]{ST}. We use the notation $[\cdot]$ to denote the cohomology class.
		 For each $\alpha\in I$, we define a connection $\downtriangle_\alpha^{\frac{S+\Gamma}{\hbar}}:\mathcal{H}_{S+\Gamma}\to \mathcal{H}_{S+\Gamma}$ by
		\begin{align*}
			\downtriangle_\alpha^{\frac{S+\Gamma}{\hbar}}([w]):=&\left[e^{\tfrac{-(S+\Gamma)}{\hbar}}\frac{\partial}{\partial t^\alpha}\left(e^{\tfrac{S+\Gamma}{\hbar}}w\right)\right]\\
			=&\left[\frac{\partial}{\partial t^\alpha}w+\frac{1}{\hbar}\frac{\partial(S+\Gamma)}{\partial t^\alpha}w\right]
		\end{align*}
		for $w\in\cB^0 [\![\underline{t}]\!](\!(\hbar)\!)$.
In the theory of primitive forms, the following differential equation (cf. \cite[equation (S2)]{ST}) plays an important role:
    \begin{equation}\label{WPF}
			\hbar\nabla_\a^{\frac{S+\G}{\hbar}}\hbar\nabla_\b^{\frac{S+\G}{\hbar}}\boldsymbol\zeta=\sum_{\rho}\mathbf{A}_{\alpha\beta}{}^\rho\cdot\hbar\nabla_\rho^{\frac{S+\G}{\hbar}}\boldsymbol\zeta+\big(\delta_{S+\Gamma}+\hbar\Delta\big)(\mathbf{\Lambda}_{\alpha\beta})
    \end{equation}
        where $\boldsymbol\zeta\in\cB^0[\![\underline{t}]\!][\![\hbar]\!]$, $\mathbf{A}_{\alpha\beta}{}^\rho={}^0A_{\alpha\beta}{}^\rho+{}^1A_{\alpha\beta}{}^\rho\hbar\in\mathbb{C}[\![\underline{t}]\!][\![\hbar]\!]$, and
$\mathbf{\Lambda}_{\alpha\beta}\in\cB^{-1}[\![\underline{t}]\!][\![\hbar]\!]$.
Note that $\boldsymbol\zeta$ serves as a primitive form. We sometimes abbreviate the notation $\partial_\a =\frac{\partial}{\partial t_\a}, \a\in I$.
		
\begin{proposition} \label{wpfflat}
Solutions $\boldsymbol\zeta\in\cB^0[\![\underline{t}]\!][\![\hbar]\!]$, $\Gamma\in \cB[\![\ud t]\!]$, and $\mathbf{A}_{\alpha\beta}{}^\rho={}^0A_{\alpha\beta}{}^\rho+{}^1A_{\alpha\beta}{}^\rho\hbar\in\mathbb{C}[\![\underline{t}]\!][\![\hbar]\!]$ to the differential system \eqref{WPF} give rise to the structure of a formal flat $F$-manifold $(M,\circ, e, \nabla)$
where $M=H(\cB, \delta_S)$, $\circ$ is the multiplication induced from $\cB$ (namely, $\partial_\alpha\circ \partial_\beta = \sum_\rho {}^0A_{\alpha\beta}{}^\rho\cdot\partial_\rho$), $e=[1]$, and the connection $\nabla$ is defined by $\nabla_{\partial_\a}\partial_\b = \sum_{\rho} {}^1A_{\alpha\beta}{}^\rho \partial_\rho$.
\end{proposition}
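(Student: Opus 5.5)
Starting from \eqref{WPF}, I read off the flat $F$-manifold axioms by comparing $\hbar$-expansions and by exploiting the commutativity of the operators $\hbar\nabla_\a:=\hbar\nabla_\a^{\frac{S+\G}{\hbar}}$ on $\mathcal{H}_{S+\G}$. The tangent space at $\ud t$ is identified with $J_S$ through $\partial_\a\mapsto[\partial_\a\G]$. At $\ud t=0$ this is the basis $[u_\a]$, and over $\C[[\ud t]]$ it remains a basis of $\cB^0[[\ud t]]/\delta_{S+\G}(\cB^{-1}[[\ud t]])$ by a Nakayama-type argument. I also assume that $\boldsymbol\zeta$ is normalized so that $\boldsymbol\zeta|_{\hbar=0}$ is invertible in the quotient, for instance $\boldsymbol\zeta=1+O(\hbar)$; this is the standard primitive form normalization, and I will make it explicit.

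\textbf{Step 1 (product).} I expand \eqref{WPF} at order $\hbar^0$. We have $\hbar\nabla_\a\boldsymbol\zeta=\partial_\a\G\,\boldsymbol\zeta+\hbar\partial_\a\boldsymbol\zeta$, so the leading terms give
\[
\partial_\a\G\,\partial_\b\G\,\zeta_0\equiv\sum_\rho{}^0A_{\a\b}{}^\rho\,\partial_\rho\G\,\zeta_0 \pmod{\delta_{S+\G}}.
\]
Since $\zeta_0$ is invertible, $\partial_\a\circ\partial_\b=\sum_\rho{}^0A_{\a\b}{}^\rho\partial_\rho$ is exactly the product induced by the commutative associative product on $H^0(\cB[[\ud t]],\delta_{S+\G})$. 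Commutativity and associativity follow at once, and the unit $e$ is the vector field corresponding to $[1]$. Flatness of $e$ and the potentiality conditions are then obtained from the pencil.

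\textbf{Step 2 (pencil).} The key identity is $[\hbar\nabla_\a,\hbar\nabla_\b]=0$, which holds because $\partial_\a,\partial_\b$ commute and $e^{-(S+\G)/\hbar}\partial_\a e^{(S+\G)/\hbar}$ are conjugates of commuting operators. The operator $\hbar\nabla_\a$ also commutes with $\delta_{S+\G}+\hbar\Delta$ up to the term $\{\partial_\a\G,-\}$, which is itself exact, so it descends to $\mathcal{H}_{S+\G}$. The plan is to show that the classes $\hbar\nabla_\rho\boldsymbol\zeta$ form a $\C[[\ud t]][[\hbar]]$-basis of the relevant lattice $\mathcal{H}^{(0)}$-submodule; this follows by reducing modulo $\hbar$ and using Step 1. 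Writing $\hbar\nabla_\a(\hbar\nabla_\b\boldsymbol\zeta)=\sum_\rho(A^0+\hbar A^1)_{\a\b}{}^\rho\,\hbar\nabla_\rho\boldsymbol\zeta$, I apply $\hbar\nabla_\g$ again and rewrite using the Leibniz rule $\hbar\nabla_\g(f\,w)=\hbar(\partial_\g f)w+f\,\hbar\nabla_\g w$. Taking the antisymmetrization in $\g,\a$ and requiring that it vanish yields, coefficient by coefficient in $\hbar$ (after uniqueness of coordinates in the basis), the relations $\nabla_\a\circ$-symmetry, associativity (again), and $\partial_\g A^1_{\a\b}-\partial_\a A^1_{\g\b}+[A^1_\g,A^1_\a]=0$. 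Together with the symmetry $A_{\a\b}=A_{\b\a}$, which comes from symmetry of the left side of \eqref{WPF} in $\a,\b$ modulo the commutator, these are precisely the statements that $\nabla^z=\nabla+z\circ$ is torsion-free and flat for every $z$. Here I set $z=\hbar^{-1}$ formally and match the powers $z^0,z^1,z^2$.

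\textbf{Step 3 (unit flatness).} To show $\nabla e=0$, I express $e$ in the basis. Then $\hbar\nabla_\a$ applied to the class representing $e$ (namely $\boldsymbol\zeta$ itself, up to normalization) produces $\hbar\nabla_\a\boldsymbol\zeta$ plus $\hbar\partial_\a$ of the coefficients, and I read off ${}^1A$-contributions, using that $e$ has constant coordinates in the flat frame.

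I expect the main obstacle to be the uniqueness and basis claim in Step 2: that the $\hbar\nabla_\rho\boldsymbol\zeta$ are $\C[[\ud t]][[\hbar]]$-linearly independent in $\mathcal{H}^{(0)}_{S+\G}$, so that coefficients can be compared. I would try an $\hbar$-adic induction using the degeneration of the spectral sequence from $\delta_{S+\G}$ to $\delta_{S+\G}+\hbar\Delta$, that is, freeness of $\mathcal{H}^{(0)}$ with fiber $J_S$ at $\hbar=0$. If this is not available in general, I would add it as the implicit hypothesis that $J_S$ arises as in \eqref{rassume} with the spectral sequence degenerating.
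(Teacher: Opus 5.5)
Your Steps 1 and 2 are essentially the paper's proof. The paper applies $\hbar\nabla_\gamma$ to \eqref{WPF} and uses commutativity of the $\hbar\nabla_\alpha$, together with uniqueness of coefficients in the frame $\hbar\nabla_\rho\boldsymbol\zeta$ (which it also takes for granted). From this it concludes that $\hbar\partial_\gamma\mathbf{A}_{\alpha\beta}{}^\sigma+\sum_\rho\mathbf{A}_{\alpha\beta}{}^\rho\mathbf{A}_{\gamma\rho}{}^\sigma$ is symmetric in $\alpha,\beta,\gamma$. It then reads off associativity, symmetry of $\nabla\circ$ and flatness of $\nabla$ at orders $\hbar^0,\hbar^1,\hbar^2$. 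Your normalization of $\boldsymbol\zeta|_{\hbar=0}$ is a sensible addition: it is what identifies ${}^0A$ with the product of $J_S$ and $e$ with $[1]$. The paper stops there and never verifies $\nabla e=0$. Your Step 3 is the only place you go beyond it, and it does not work.

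\textbf{The gap in Step 3.} The step is circular: ``$e$ has constant coordinates in the flat frame'' is precisely the statement $\nabla e=0$. Carry out the computation you describe. Write $\boldsymbol\zeta=\sum_\rho\mathbf{e}^\rho\,\hbar\nabla_\rho\boldsymbol\zeta$ modulo $(\delta_{S+\Gamma}+\hbar\Delta)$-exact terms, with $\mathbf{e}^\rho=\mathbf{e}^\rho_0+\hbar\mathbf{e}^\rho_1+\cdots\in\mathbb{C}[[\ud t]][[\hbar]]$. Applying $\hbar\nabla_\alpha$ and comparing coefficients gives $\delta_\alpha{}^\sigma=\hbar\partial_\alpha\mathbf{e}^\sigma+\sum_\rho\mathbf{e}^\rho\mathbf{A}_{\alpha\rho}{}^\sigma$.
\begin{itemize}
\item At order $\hbar^0$ this says $\sum_\rho\mathbf{e}^\rho_0\partial_\rho=e$.
\item At order $\hbar^1$ it says $\nabla_{\partial_\alpha}e=-\partial_\alpha\circ\mathbf{e}_1$, where $\mathbf{e}_1=\sum_\rho\mathbf{e}^\rho_1\partial_\rho$.
\end{itemize}
So $\nabla e=0$ if and only if $\mathbf{e}_1=0$, and nothing in \eqref{WPF} forces this.

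\textbf{Counterexample.} Take $n=1$, $S=\tfrac12x^2$, $\boldsymbol\zeta=1$ and $u_0=1+x^2$. Since $(\delta_S+\hbar\Delta)(x\eta)=x^2+\hbar$, at $\ud t=0$ you get $1\equiv(1-\hbar)^{-1}u_0$, so $\mathbf{e}_1|_{\ud t=0}=1$. Solving \eqref{F-QM11} directly confirms this.
\begin{itemize}
\item At order $t^0$: $a_{00}{}^0=1$, $\lambda_{00}=(x+x^3)\eta$, and $u_{00}=1+3x^2$.
\item At order $t^1$: divisibility by $x$ in the $t^1$-coefficient of the first equation forces $a_{000}{}^0=1$.
\end{itemize}
Hence $A_{00}{}^0=1+t+O(t^2)$ and ${}^1A=0$. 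The unit is $e=(A_{00}{}^0)^{-1}\partial_0$, and $\nabla_{\partial_0}e=-\partial_0+O(t)\neq0$. If you only assume $\boldsymbol\zeta|_{\hbar=0}$ invertible, a counterexample is even easier: $\Gamma=t$ and $\boldsymbol\zeta=e^{ct}$ give $\mathbf{A}=1+c\hbar$ and $\nabla e=c\,\partial_0$.

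\textbf{What is needed.} Unit flatness requires an extra hypothesis, for instance that $\boldsymbol\zeta\equiv\sum_\rho e^\rho(\ud t)\,\hbar\nabla_\rho\boldsymbol\zeta$ with $\hbar$-independent coefficients. This holds, for example, when $u_0=1$, $\partial_0\Gamma=1$ and $\boldsymbol\zeta=1$, since then $\boldsymbol\zeta=\hbar\nabla_0\boldsymbol\zeta$. Under such a hypothesis your $\hbar^1$ comparison does yield $\nabla e=0$.

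\textbf{A minor point.} $\hbar\nabla_\alpha$ commutes with $\delta_{S+\Gamma}+\hbar\Delta$ exactly, since both are conjugates by $e^{(S+\Gamma)/\hbar}$ of the commuting operators $\hbar\partial_\alpha$ and $\hbar\Delta$. The correction term you mention would not be exact in general.
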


\begin{proof}
Since $\hbar\nabla_\a^{\frac{S+\G}{\hbar}}\hbar\nabla_\b^{\frac{S+\G}{\hbar}} = \hbar\nabla_\b^{\frac{S+\G}{\hbar}}\hbar\nabla_\a^{\frac{S+\G}{\hbar}}$, the multiplication $\circ$ is commutative and $\nabla$ is torsion-free. 
Consider the following equation (with a simplified notation $\downtriangle_\a=\downtriangle_\a^{\frac{S+\G}{\hbar}}$):
\begin{align*}
	\hbar\downtriangle_\gamma\hbar\downtriangle_\beta\hbar\downtriangle_\alpha\boldsymbol\zeta=&\hbar\downtriangle_\gamma \Big[\sum_{\rho\in I}\left(\mathbf{A}_{\alpha\beta}{}^\rho\right)\hbar\downtriangle_\rho\boldsymbol\zeta\Big]+(\delta_{S+\Gamma}+\hbar\Delta)(\hbar\downtriangle_\gamma\mathbf{\Lambda}_{\alpha\beta})\\
	=&\sum_{\rho\in I}\Big[\hbar\partial_\gamma(\mathbf{A}_{\alpha\beta}{}^\rho)\hbar\downtriangle_\rho\boldsymbol\zeta+(\mathbf{A}_{\alpha\beta}{}^\rho)\hbar\downtriangle_\gamma\hbar\downtriangle_\rho\boldsymbol\zeta\Big]+(\delta_{S+\Gamma}+\hbar\Delta)(\hbar\downtriangle_\gamma\mathbf{\Lambda}_{\alpha\beta})\\
	=&\sum_{\delta\in I}\bigg[\hbar\mathbf{A}_{\alpha\beta,\gamma}{}^\delta+\sum_{\rho\in I}\big(\mathbf{A}_{\alpha\beta}{}^\rho\mathbf{A}_{\gamma\rho}{}^\delta\big)\bigg]\hbar\downtriangle_\delta\boldsymbol\zeta\\
	&+(\delta_{S+\Gamma}+\hbar\Delta)\Big(\hbar\downtriangle_\gamma\mathbf{\Lambda}+\sum_{\rho\in I}\mathbf{A}_{\alpha\beta}{}^\rho\mathbf{\Lambda}_{\gamma\rho}\Big).
\end{align*}
Therefore, the following quantities are invariant under the permutation of $\alpha,\beta,\gamma$ for all $\delta$:
\begin{equation}\label{weak_F}
	\begin{aligned}
		\hbar\mathbf{A}_{\alpha\beta,\gamma}{}^\delta+&\sum_{\rho\in I}\big(\mathbf{A}_{\alpha\beta}{}^\rho\mathbf{A}_{\gamma\rho}{}^\delta\big)\\
		=&\sum_{\rho\in I}{}^0A_{\alpha\beta}{}^\rho\cdot{}^0A_{\gamma\rho}{}^\delta+\hbar\Big[{}^0A_{\alpha\beta,\gamma}{}^\delta+\sum_{\rho\in I}{}^0A_{\alpha\beta}{}^\rho\cdot{}^1A_{\gamma\rho}{}^\delta+{}^1A_{\alpha\beta}{}^\rho\cdot{}^0A_{\gamma\rho}{}^\delta\Big]\\
		&+\hbar^2\Big[{}^1A_{\alpha\beta,\gamma}{}^\delta+\sum_{\rho\in I}{}^1A_{\alpha\beta}{}^\rho\cdot{}^1A_{\gamma\rho}{}^\delta\Big].
	\end{aligned}
\end{equation}

The invariance of $\sum_{\rho\in I}{}^0A_{\alpha\beta}{}^\rho\cdot{}^0A_{\gamma\rho}{}^\delta$ under the permutation of indices indicates that $\circ$ is associative.
The invariance of ${}^0A_{\alpha\beta,\gamma}{}^\delta+\sum_{\rho\in I}{}^0A_{\alpha\beta}{}^\rho\cdot{}^1A_{\gamma\rho}{}^\delta+{}^1A_{\alpha\beta}{}^\rho\cdot{}^0A_{\gamma\rho}{}^\delta$ implies that the $(3,1)$-tensor $\nabla \circ$ is symmetric in all three arguments. Thus \eqref{F-pot} holds by using \cite[Theorem 2.14]{Her02}.
Moreover, this invariance of indices also says that 
each element of the pencil $(\nabla^z)_{z\in \C}$, defined by $\nabla_X^z(Y) = \nabla_X Y + z X\circ Y$ is torsion-free and flat:
\be
&& \nabla^z_{X}Y -\nabla^z_{Y}X=[X, Y] \quad (\text{torsion-free}),\\
&&\nabla^z_{X} \nabla^z_{Y} -\nabla^z_{Y} \nabla^z_{X} =\nabla^z_{[X, Y]} \quad (\text{flat}).
\ee
\end{proof}

We analyze \eqref{WPF} when $\boldsymbol\zeta=1 \in \cB$.
Since $\partial_\alpha\Gamma=\hbar\downtriangle_\alpha^{\frac{S+\Gamma}{\hbar}}1$ is a $\mathbb{C}[\![\underline{t}]\!][\![\hbar]\!]$-basis of $\mathcal{H}_{S+\Gamma}^{(0)}$, there is a connection matrix $\mathbf{A}_{\alpha\beta}{}^\rho\in\mathbb{C}[\![\underline{t}]\!][\![\hbar]\!]$ with respect to the basis $\{\hbar\downtriangle_\rho^{\frac{S+\Gamma}{\hbar}}1:\rho\in I\}$ such that
\[
	\hbar\downtriangle_\beta^{\frac{S+\Gamma}{\hbar}}(\hbar\downtriangle_\alpha^{\frac{S+\Gamma}{\hbar}}1)=\sum_{\rho\in I}\mathbf{A}_{\alpha\beta}{}^\rho\cdot(\hbar\downtriangle_\rho^{\frac{S+\Gamma}{\hbar}}1)+(\delta_{S+\Gamma}+\hbar\Delta)(\mathbf{\Lambda}_{\alpha\beta})
\]
for some $\mathbf{\Lambda}_{\alpha\beta}\in\cB^{-1}[\![\underline{t}]\!][\![\hbar]\!]$ and all $\alpha,\beta\in I$. 
Suppose that there is $\Gamma \in \cB^0[[\ud t]]$ that makes $\mathbf{A}_{\alpha\beta}{}^\rho$ and $\mathbf{\Lambda}_{\alpha\beta}$ have no $\hbar$-power terms, i.e., $\ud t= \{t^\alpha\}$ is a $\nabla$-flat coordinate (${}^1A_{\alpha\beta}{}^\rho=0$) and there is a connection matrix $A_{\alpha\beta}{}^\rho={}^0A_{\alpha\beta}{}^\rho \in\mathbb{C}[\![\underline{t}]\!]$ such that
\begin{equation}\label{F-QM}
	\hbar\downtriangle_\beta^{\frac{S+\Gamma}{\hbar}}(\hbar\downtriangle_\alpha^{\frac{S+\Gamma}{\hbar}}1)=\sum_{\rho\in I}A_{\alpha\beta}{}^\rho\cdot(\hbar\downtriangle_\rho^{\frac{S+\Gamma}{\hbar}}1)+(\delta_{S+\Gamma}+\hbar\Delta)(\Lambda_{\alpha\beta}),
\end{equation}
for some $\Lambda_{\alpha\beta}\in\cB^{-1}[\![\underline{t}]\!]$ and all $\alpha,\beta\in I$. Then, by comparing $\hbar$-power terms, the equation \eqref{F-QM} reduces further to the following:
\begin{equation}\label{F-QM11}
	\begin{aligned}
		\partial_\alpha\Gamma\cdot\partial_\beta\Gamma&=\sum_{\rho\in I}A_{\alpha\beta}{}^\rho\cdot\partial_\rho\Gamma+\delta_{S+\Gamma}(\Lambda_{\alpha\beta}),\\
		\partial_\beta\partial_\alpha\Gamma&=\Delta(\Lambda_{\alpha\beta}),
	\end{aligned}
\end{equation}
for all $\alpha,\beta\in I$. 
By Proposition \ref{wpfflat}, solutions of \eqref{F-QM11} give a formal flat $F$-manifold structure on $J_S$. For simplicity, we use the notation $\Gamma_{\alpha_1\alpha_2\cdots\alpha_\ell}:=\partial_{\alpha_\ell}\cdots\partial_{\alpha_2}\partial_{\alpha_1}\Gamma$. Note that equation \eqref{F-QM11} is similar to the equation appeared in the final part of the proof of \cite[Lemma 7.1]{BK}.


\section{An explicit algorithm with flat coordinates}\label{sec3.4}
This section is the key point of the current article. Now we provide an algorithm for a solution to the differential equation \eqref{F-QM11}.
Recall that $\partial_\alpha\Gamma|_{\underline{t}=0}=u_\alpha$ for all $\alpha\in I$, where $\{[u_\alpha]:\alpha\in I\}$ is a $\mathbb{C}$-basis of $J_S$.
We use the following notation:
\begin{equation}\label{Exp_notation}
	\begin{aligned}
		\Gamma&=\sum_{\alpha\in I}u_\alpha\cdot t^\alpha+\sum_{m\geq2}\sum_{\underline{\alpha}\in I^m}\frac{1}{m!}u_{\underline{\alpha}}t^{\underline{\alpha}}\in \cB^0 [\![\underline{t}]\!],\\
		A_{\alpha\beta}{}^\rho&=a_{\alpha\beta}{}^\rho+\sum_{m\geq1}\sum_{\underline{\alpha}\in I^m}\frac{1}{m!}a_{\alpha\beta\underline{\alpha}}{}^\rho t^{\underline{\alpha}}\in\mathbb{C}[\![\underline{t}]\!],\\
		\Lambda_{\alpha\beta}&=\lambda_{\alpha\beta}+\sum_{m\geq1}\sum_{\underline{\alpha}\in I^m}\frac{1}{m!}\lambda_{\alpha\beta\underline{\alpha}}t^{\underline{\alpha}}\in\cB^{-1} [\![\underline{t}]\!],
	\end{aligned}
\end{equation}
where $u_{\underline{\alpha}}\in\cB^0$, $a_{\alpha\beta}{}^\rho,a_{\alpha\beta\underline{\alpha}}{}^\rho\in\mathbb{C}$, $\lambda_{\alpha\beta},\lambda_{\alpha\beta\underline{\alpha}}\in\cB^{-1}$, and $t^{\alpha_1\cdots\alpha_m}:=t^{\alpha_1}\cdots t^{\alpha_m}$.

We assume that $u_{\underline{\alpha}},a_{\alpha\beta\underline{\alpha}}{}^\rho,\lambda_{\alpha\beta\underline{\alpha}}$ are invariant under the permutation of indices of $\underline{\alpha}$ under our notation. This assumption is important for our algorithm (see Remark \ref{nontrivialE}). 
We will provide a perturbative explicit algorithm which calculates $\Gamma, A_{\alpha\beta}{}^\rho$ satisfying equation \eqref{F-QM11}, i.e., we have to determine $u_{\underline{\alpha}},a_{\alpha\beta}{}^\rho, $and $ a_{\alpha\beta\underline{\alpha}}{}^\rho$ which satisfy \eqref{F-QM11}.\par 

For $[u] \in J_S$ with $u \in \cB$,
    \begin{equation}\label{ddbar}
        \text{ we assume that $\delta_S(u)=0$ implies that $\Delta(u)$ belongs to the image of $\Delta \circ \delta_S$. }
    \end{equation}
This assumption is a weak analogue of the $\partial \overline{\partial}$-lemma for compact K\"{a}hler manifolds. Our algorithm works without this assumption, but it guarantees the uniqueness of a solution (see Corollary \ref{Cor_Alg}).
%

\begin{definition}\label{u_alpha}
	Let $|\underline{\alpha}|=m$ which means that $\underline{\alpha}\in I^m$. We define the notation $u_{\underline{\alpha}}^{(i)}$ as follows:
	\[
		u^{(i)}_{\underline{\alpha}}=\sum_{\substack{\underline{\alpha}_1\sqcup\cdots\sqcup \underline{\alpha}_{m-i}=\underline{\alpha}\\\underline{\alpha}_j\neq\emptyset}}\frac{1}{(m-i)!}u_{\underline{\alpha}_1}\cdots u_{\underline{\alpha}_{m-i}},\quad(0\leq i\leq m-1),	
	\]
	where the notation $\underline{\alpha}_1\sqcup\cdots\sqcup \underline{\alpha}_{m-i}=\underline{\alpha}$ means $\underline{\alpha}_1\cup\cdots\cup \underline{\alpha}_{m-i}=\underline{\alpha}$ and $\underline{\alpha}_k\cap \underline{\alpha}_\ell=\emptyset$ for $\ell\neq k$.
\end{definition}
Note that $u_{\underline{\alpha}}^{(i)}$ is invariant under the permutation of indices of $\underline{\alpha}$. For example,
\[
	\begin{aligned}
		u_{\alpha\beta\gamma}^{(0)}&=u_\alpha u_\beta u_\gamma,\\
		u_{\alpha\beta\gamma}^{(1)}&=u_\alpha u_{\beta\gamma}+ u_{\beta} u_{\alpha\gamma}+ u_\gamma u_{\alpha\beta},\\
		u_{\alpha\beta\gamma}^{(2)}&=u_{\alpha\beta\gamma},
	\end{aligned}
	\quad\textrm{and}\quad
	\begin{aligned}
		u_{\alpha\beta\gamma\delta}^{(0)}=&u_\alpha u_\beta u_\gamma u_\delta,\\
		u_{\alpha\beta\gamma\delta}^{(1)}=&u_{\alpha\beta}u_{\gamma}u_{\delta}+u_{\alpha\gamma}u_{\beta}u_{\delta}+u_{\alpha\delta}u_{\beta}u_{\gamma}\\
		&+u_{\beta\gamma}u_{\alpha}u_{\delta}+u_{\beta\delta}u_{\alpha}u_{\gamma}+u_{\gamma\delta}u_{\alpha}u_{\beta},\\
		u_{\alpha\beta\gamma\delta}^{(2)}=&u_{\alpha\beta} u_{\gamma\delta}+ u_{\alpha\gamma} u_{\beta\delta}+ u_{\alpha\delta} u_{\beta\gamma}\\
		&+u_\alpha u_{\beta\gamma\delta}+ u_{\beta} u_{\alpha\gamma\delta}+ u_\gamma u_{\alpha\beta\delta}+ u_{\delta} u_{\alpha\beta\gamma},\\
		u_{\alpha\beta\gamma\delta}^{(3)}=&u_{\alpha\beta\gamma\delta}.
	\end{aligned}
\]\par

The structure of the algorithm is as follows.
\begin{itemize}
\item \textbf{Input of the algorithm}:
Choose a $\mathbb{C}$-basis $\{[u_\alpha]=u_\a + \delta_S(\cB^{-1}):\alpha\in I\}$ of $J_S =H^0(\cB, \delta_S)$. 
\item \textbf{Output of the algorithm}:
We obtain an algebraic perturbative expansion of $\G$ and $A_{\alpha\beta}{}^\rho$ which provides the multiplication $\circ$ on $J_S$, i.e. the formal flat $F$-manifold $(J_S, \circ, [1], \nabla)$ where we use a $\nabla$-flat coordinate $\ud t$ so that $\nabla_{\partial_\a} \partial_\b =0$.
\end{itemize}

Now we give the algorithm for $\Gamma$ and $A_{\alpha\beta}{}^\rho$ as follows:
\begin{description}
	\item[Step-1] Determine $a_{\alpha\beta}^{(0)}$ and $\lambda_{\alpha\beta}^{(0)}$ using a basis $\{[u_\alpha]:\alpha\in I\}$ of $J_S$ as follows:
	\[
		u_\alpha u_\beta=\sum_{\rho\in I}a_{\alpha\beta}^{(0)}{}^\rho u_\rho+\delta_S({\lambda_{\alpha\beta}^{(0)}}).
	\]
	Note that $a_{\alpha\beta}^{(0)}{}^\rho\in\mathbb{C}$ is unique and $\lambda_{\alpha\beta}^{(0)}$ is unique up to $\ker \delta_S$. 
    Under the assumption \eqref{ddbar}, $\Delta(\lambda_{\alpha\beta}^{(0)})$ is unique up to $\mathrm{Im}(\delta_S)$.

    Then, define $u_{\alpha\beta}:=\Delta(\lambda_{\alpha\beta}^{(0)})$ and $a_{\alpha\beta}{}^\rho:=a_{\alpha\beta}^{(0)}{}^\rho$. 
	\item[Step-$\boldsymbol\ell$ $(\ell\geq2)$] Suppose that $|\underline{\alpha}|=\ell+1$. Determine $a_{\underline{\alpha}}^{(i)}{}^\rho$ and $\lambda_{\underline{\alpha}}^{(i)}$ ($0\leq i\leq \ell-1$) in sequence as follows:
	\begin{equation}\label{ind_u}
			\begin{aligned}
				u_{\underline{\alpha}}^{(0)}&=\sum_{\rho\in I}a_{\underline{\alpha}}^{(0)}{}^\rho u_\rho+\delta_S(\lambda_{\underline{\alpha}}^{(0)}),\\
				u_{\underline{\alpha}}^{(1)}-\Delta(\lambda_{\underline{\alpha}}^{(0)})&=\sum_{\rho\in I}a_{\underline{\alpha}}^{(1)}{}^\rho u_\rho+\delta_S(\lambda_{\underline{\alpha}}^{(1)}),\\
				&\quad\vdots\\
				u_{\underline{\alpha}}^{(\ell-1)}-\Delta(\lambda_{\underline{\alpha}}^{(\ell-2)})&=\sum_{\rho\in I}a_{\underline{\alpha}}^{(\ell-1)}{}^\rho u_\rho+\delta_S(\lambda_{\underline{\alpha}}^{(\ell-1)}).
		\end{aligned}
	\end{equation}
	Since $\lambda_{\underline{\alpha}}^{(i)}$ is unique up to $\ker \delta_S$ for $0\leq i\leq\ell-2$, the assumption \eqref{ddbar} implies that $\Delta(\lambda_{\underline{\alpha}}^{(i)})$ is unique up to $\mathrm{Im}(\delta_S)$. Therefore, $a_{\underline{\alpha}}^{(\ell-1)}{}^\rho$ is independent of the choices of $\lambda_{\underline{\alpha}}^{(i)}$. Then, define $a_{\underline{\alpha}}{}^\rho:=a_{\underline{\alpha}}^{(\ell-1)}{}^\rho$ and $u_{\underline{\alpha}}:=\Delta(\lambda_{\underline{\alpha}}^{(\ell-1)})$.
\end{description}
By this inductive algorithm, we can completely determine $\Gamma, A_{\alpha\beta}{}^\rho$, and $\Lambda_{\a\b}$ which turn out to satisfy the equations \eqref{F-QM11}.
The following corollary is clear from the algorithm.
\begin{corollary}\label{Cor_Alg}
	Under the assumption \eqref{ddbar}, the result $A_{\alpha\beta}{}^\rho(\ud t) \in \C[[\ud t]]$ of the above algorithm depends only on the choice of $\mathbb{C}$-basis $\{[u_\alpha]:\alpha\in I\}$ of $J_S$.
\end{corollary}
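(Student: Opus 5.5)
We argue by induction on the step number $\ell$. The inductive claim is the following: for every multi-index $\underline{\alpha}$ with $|\underline{\alpha}|\le \ell+1$, the constants $a_{\underline{\alpha}}{}^\rho$ are independent of all choices made in the algorithm, and the elements $u_{\underline{\alpha}}$ (with $|\underline{\alpha}|\le \ell+1$) are well defined modulo $\mathrm{Im}(\delta_S)$. Both statements are meant once the basis representatives are fixed up to $\delta_S$-exact terms, since only the classes $[u_\alpha]$ are given. Then $A_{\alpha\beta}{}^\rho(\underline t)=a_{\alpha\beta}{}^\rho+\sum_m \frac{1}{m!}a_{\alpha\beta\underline{\alpha}}{}^\rho t^{\underline{\alpha}}$ depends only on the basis $\{[u_\alpha]\}$.

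\textbf{Base case (Step-1).} The coefficients $a^{(0)}_{\alpha\beta}{}^\rho$ are the structure constants of $J_S$ in the basis $\{[u_\alpha]\}$. They are therefore unique, and unchanged if $u_\alpha$ is replaced by $u_\alpha+\delta_S(v)$, since $\delta_S$ is a derivation and $u_\beta$ is $\delta_S$-closed. The element $\lambda^{(0)}_{\alpha\beta}$ is determined up to $\ker\delta_S$, and also shifts by a term of the form $v u_\beta$ when a representative changes. By assumption \eqref{ddbar}, $\Delta(\ker\delta_S)\subset \Delta\delta_S(\cB)$. Using $\Delta\delta_S=-\delta_S\Delta$ (from $(\delta_S+\Delta)^2=0$), this lies in $\mathrm{Im}(\delta_S)$. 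Hence $u_{\alpha\beta}=\Delta(\lambda^{(0)}_{\alpha\beta})$ is well defined modulo $\mathrm{Im}(\delta_S)$.

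\textbf{Inductive step.} Fix $\ell\ge2$ and $|\underline{\alpha}|=\ell+1$. By the induction hypothesis, each $u_{\underline{\beta}}$ with $|\underline{\beta}|\le\ell$ is defined modulo $\mathrm{Im}\,\delta_S$. We first check that each $u^{(i)}_{\underline{\alpha}}$ is then well defined modulo $\mathrm{Im}\,\delta_S$. Changing one factor $u_{\underline{\alpha}_j}$ by $\delta_S(w)$ changes the product by $\delta_S(w\cdot\prod_{k\ne j}u_{\underline{\alpha}_k})$, provided the other factors are $\delta_S$-closed.

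This requires knowing that every $u_{\underline{\beta}}$ is $\delta_S$-closed. I expect this to be the main obstacle, and I would establish it simultaneously within the induction. Since $u_{\underline{\beta}}=\Delta(\lambda)$, we have $\delta_S u_{\underline{\beta}}=-\Delta\delta_S\lambda$. Here $\delta_S\lambda$ is the left-hand side of the relevant line of \eqref{ind_u} minus $\sum a\,u_\rho$. Showing that $\Delta$ of this vanishes appears to need the degree-zero vanishing $\cB^1=0$ (degrees lie in $[-n,0]$), so $\delta_S$ is identically zero on $\cB^0$. Indeed $u_{\underline{\beta}}\in\cB^0$, so closedness is automatic. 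With that observed, the argument simplifies.

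Now proceed line by line through \eqref{ind_u}. If the left-hand side of line $i$ changes by $\delta_S(w)$, then:
\begin{itemize}
\item $a^{(i)}_{\underline{\alpha}}{}^\rho$ is unchanged, since $\{[u_\rho]\}$ is a basis of $J_S=\cB^0/\delta_S(\cB^{-1})$;
\item $\lambda^{(i)}_{\underline{\alpha}}$ changes by $w$ plus an element of $\ker\delta_S$;
\item $\Delta(\lambda^{(i)}_{\underline{\alpha}})$ therefore changes by $\Delta(w)$ plus an element of $\mathrm{Im}\,\delta_S$, by \eqref{ddbar} as in the base case.
\end{itemize}

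So for the next line to change only by an exact term, we must control $\Delta(w)$. To do this, I would track ambiguities at the level of pairs $(x,\lambda)$ and use the identity $\Delta\delta_S=-\delta_S\Delta$, which converts $\Delta(w)$ into a form compatible with the next line. The point to verify carefully is that the accumulated ambiguity remains in $\mathrm{Im}(\delta_S)+\Delta(\ker\delta_S)$, and that this set equals $\mathrm{Im}\,\delta_S$ under \eqref{ddbar}.

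Running through all $\ell$ lines gives that $a_{\underline{\alpha}}{}^\rho=a^{(\ell-1)}_{\underline{\alpha}}{}^\rho$ is choice-independent and that $u_{\underline{\alpha}}$ is well defined modulo $\mathrm{Im}\,\delta_S$. This closes the induction, and hence $A_{\alpha\beta}{}^\rho(\underline t)$ depends only on the basis $\{[u_\alpha]\}$.
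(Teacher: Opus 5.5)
\textbf{There is a genuine gap: the invariant you propagate is false.} You claim that every $u_{\underline\alpha}$ is well defined modulo $\mathrm{Im}\,\delta_S$. The step you flag as needing care is exactly where this breaks, and it breaks in two independent ways.

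\emph{Changing representatives.} In Step-1, $\lambda^{(0)}_{\alpha\beta}$ shifts by terms like $v\,u_\beta$. But $\Delta(v\,u_\beta)=\Delta(v)\,u_\beta\pm\{v,u_\beta\}$ is not $\delta_S$-exact in general. Take $S=\tfrac13x^3$. Here $n=1$ and $\ker\delta_S\cap\cB^{-1}=0$, so \eqref{ddbar} holds and the algorithm involves no choices at all.
\begin{itemize}
\item With $u_0=1$, $u_1=x$ you get $u_{11}=0$. In fact $\Gamma=t^0+t^1x$ is an exact solution, with $A_{11}{}^0=-t^1$.
\item With $u_1=x+x^3=x+\delta_S(x\eta)$ you get $u_{11}=\Delta\big((1+x^2)^2\eta\big)=4x+4x^3$. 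Its class is $4[x]\neq0$.
\item Running the algorithm to $|\underline\alpha|=4$ in the second case gives $a_{1111}{}^0=-10$ instead of $0$.
\end{itemize}
So the output is not a function of the classes $[u_\alpha]$ alone. The corollary must be read with the representatives $u_\alpha$ themselves fixed, since they are the input and the linear part of $\Gamma$. Your reduction ``up to $\delta_S$-exact terms'' targets a false statement.

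\emph{Fixed representatives, propagation across steps.} Take $S=\tfrac13(x^3+y^3)$ with basis $1,x,y,xy$. Step-1 allows $\lambda^{(0)}_{00}=x^2y\,\eta_2-y^3\eta_1\in\ker\delta_S$ instead of $0$. This gives $u_{00}=x^2$ instead of $0$, an exact change, as you predict. But in Step-2 the second line for $\underline\alpha=(1,0,0)$ then reads $x\cdot x^2=\delta_S(x\eta_1)$. Hence $u_{100}\equiv\Delta(x\eta_1)=1$ rather than $0$. Here $w=x\eta_1$ and $\Delta(w)\notin\mathrm{Im}\,\delta_S$, so the ambiguity does not stay in $\mathrm{Im}\,\delta_S+\Delta(\ker\delta_S)$.

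What is actually unchanged is the $(\delta_S+\hbar\Delta)$-class of the whole polynomial $\mathbf U_{\underline\alpha}|_{\underline t=0}=\sum_i u^{(i)}_{\underline\alpha}\hbar^i$. In this example, $x+\hbar x^3+\hbar^2=x+(\delta_S+\hbar\Delta)(\hbar x\eta_1)$. The non-exact error $\Delta(w)$ in one line is not absorbed line by line. It is cancelled by the change of $u^{(i+1)}_{\underline\alpha}$ in the next line: for $\underline\alpha=(1,1,0,0)$, the term $-\Delta(x^2\eta_1)=-2x$ cancels against $2u_1u_{100}=2x$.

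Assumption \eqref{ddbar} makes $\{[u_\rho]\}$ a $\C[\hbar]/(\hbar^{m-1})$-basis of $\cB^0[\hbar]$ modulo $(\delta_S+\hbar\Delta)\cB^{-1}[\hbar]+\hbar^{m-1}\cB^0[\hbar]$. Hence the $a^{(i)}_{\underline\alpha}$ are determined by that truncated class. A correct proof must therefore show that these classes are independent of the choices. Equivalently, the Taylor coefficients of $[e^{\Gamma/\hbar}]$ must be choice-independent. Heuristically, a kernel choice $\kappa$ with $\Delta\kappa=-\delta_S\Delta\nu$ acts on $S+\Gamma$ as the flow of the divergence-free field $\Delta\nu$.

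The paper's own justification is the remark in Step-$\ell$, which only varies $\lambda^{(i)}_{\underline\alpha}$ with all earlier $u_{\underline\beta}$ held fixed. It therefore does not supply this step either.
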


\begin{remark}
Since $J_S=H^0(\cB, \delta_S)$ is $\cB^0 \subset \C[\ud x]$ modulo $\delta_S(\cB^{-1})$, the division algorithm based on the Gr\"obner basis for the ideal $(\frac{\partial S}{\partial x_1}, \cdots,\frac{\partial S}{\partial x_n})=\delta_S(\cB^{-1})$ of the polynomial subring $\cB^0$ makes it possible to implement the inductive procedure described in \eqref{ind_u} in a computer program.
\end{remark}

\begin{remark}[Non-triviality of solving the differential equations \eqref{F-QM11}]\label{nontrivialE}
	Symmetries among the indices of $\underline{\alpha}$ in $u_{\underline{\alpha}},a_{\alpha\beta\underline{\alpha}}{}^\rho,\lambda_{\alpha\beta\underline{\alpha}}$ play an important role. If one tries to solve the equations in a naive way by comparing the coefficients of the $\ud t$-powers, then one would get into trouble because the symmetries among the indices are not guaranteed. But our algorithm guarantees that relevant quantities such as $u_{\underline{\alpha}},a_{\alpha\beta\underline{\alpha}}{}^\rho,\lambda_{\alpha\beta\underline{\alpha}}$ are invariant under the permutation of $\ud \a$ as the equations \eqref{ind_u} indicate. Suppose that there is no condition such as $u_{\alpha\beta}=u_{\beta\alpha}$, that is, we do not put symmetry restrictions on the index notation. Then, the equation
	\[
		\partial_\beta\partial_\alpha\Gamma=\Delta(\Lambda_{\alpha\beta})
	\]
	in the equations \eqref{F-QM11} gives the following equation
	\[
		u_{\alpha\beta}+u_{\beta\alpha}=2\Delta(\lambda_{\alpha\beta}),
	\]
	but then it cannot determine the values of $u_{\alpha\beta},u_{\beta\alpha}$ individually from the data $2\Delta(\lambda_{\alpha\beta})$ of the previous step. This implies that the naive way of solving  \eqref{F-QM11} inductively with non-symmetric notations does not work well, either.
\end{remark}

\section{A proof why the algorithm works}\label{sec3.5}
Here we give a proof of the algorithm in Section \ref{sec3.4}.
\begin{definition}\label{Def_UBC}
	We define quantities $\mathbf{U},\mathbf{B},\mathbf{C}$, which are our key players in the proof of the algorithm, as follows:
	\[
		\mathbf{U}_{\alpha_1\cdots\alpha_m}:=\hbar\downtriangle_{\alpha_m}^{\frac{S+\Gamma}{\hbar}}\cdots\hbar\downtriangle_{\alpha_1}^{\frac{S+\Gamma}{\hbar}}1,
	\]
	\[
		\left\{\begin{aligned}
			{\mathbf{B}_{\alpha_1\alpha_2}}^\rho:=&{A_{\alpha_1\alpha_2}}^\rho,\\
			{\mathbf{B}_{\alpha_1\alpha_2\alpha_3}}^\rho:=&\sum_{\delta\in I}{\mathbf{B}_{\alpha_1\alpha_2}}^\delta{\mathbf{B}_{\delta\alpha_3}}^\rho+\hbar\cdot\partial_{\alpha_3}({\mathbf{B}_{\alpha_1\alpha_2}}^\rho),\\
			&\vdots\\
			{\mathbf{B}_{\alpha_1\cdots\alpha_m}}^\rho:=&\sum_{\delta\in I}{\mathbf{B}_{\alpha_1\cdots\alpha_{m-1}}}^\delta{\mathbf{B}_{\delta\alpha_m}}^\rho+\hbar\cdot\partial_{\alpha_m}({\mathbf{B}_{\alpha_1\cdots\alpha_{m-1}}}^\rho),
		\end{aligned}\right.
	\]
	\[
		\left\{\begin{aligned}
			\mathbf{C}_{\alpha_1\alpha_2}:=&\Lambda_{\alpha_1\alpha_2},\\
			\mathbf{C}_{\alpha_1\alpha_2\alpha_3}:=&\sum_{\delta\in I}{\mathbf{B}_{\alpha_1\alpha_2}}^\delta\mathbf{C}_{\delta\alpha_3}+\hbar\downtriangle_{\alpha_3}^{\frac{S+\Gamma}{\hbar}}\mathbf{C}_{\alpha_1\alpha_2},\\
			&\vdots\\
			\mathbf{C}_{\alpha_1\cdots\alpha_m}:=&\sum_{\delta\in I}{\mathbf{B}_{\alpha_1\cdots\alpha_{m-1}}}^\delta\mathbf{C}_{\delta\alpha_m}+\hbar\downtriangle_{\alpha_m}^{\frac{S+\Gamma}{\hbar}}\mathbf{C}_{\alpha_1\cdots\alpha_{m-1}}.
		\end{aligned}\right.
	\]
\end{definition}

\begin{theorem}\label{Thm_UBD}
	The equations \eqref{F-QM11} hold for all $\alpha,\beta\in I$ if and only if the following inductive equations hold for all $m\geq2$ and all $\underline{\alpha}\in I^m$:
	\begin{equation}\label{Ind_QM}
		\mathbf{U}_{\underline{\alpha}}=\sum_{\rho\in I}{\mathbf{B}_{\underline{\alpha}}}^\rho\Gamma_\rho+(\delta_{S+\Gamma}+\hbar\Delta)(\mathbf{C}_{\underline{\alpha}}).
	\end{equation}
\end{theorem}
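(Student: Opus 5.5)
The plan is to prove both directions by working with the single $\hbar$-dependent identity \eqref{F-QM} instead of the pair \eqref{F-QM11}. The first step is to check that \eqref{F-QM11} is equivalent to the case $m=2$ of \eqref{Ind_QM}. Note that $\mathbf{U}_{\alpha_1}=\hbar\downtriangle_{\alpha_1}^{\frac{S+\Gamma}{\hbar}}1=\Gamma_{\alpha_1}$. Applying the definition of $\downtriangle$ once more gives
\[
\mathbf{U}_{\alpha\beta}=\hbar\partial_\beta\Gamma_\alpha+\Gamma_\beta\Gamma_\alpha .
\]
The right side of \eqref{Ind_QM} for $m=2$ is $\sum_\rho A_{\alpha\beta}{}^\rho\Gamma_\rho+\delta_{S+\Gamma}(\Lambda_{\alpha\beta})+\hbar\Delta(\Lambda_{\alpha\beta})$. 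Nothing here other than $\hbar$ depends on $\hbar$, since $A$, $\Lambda$ and $\Gamma$ are independent of $\hbar$. Comparing the coefficients of $\hbar^0$ and $\hbar^1$ therefore gives exactly the two lines of \eqref{F-QM11}, and conversely. This settles the direction ``\eqref{Ind_QM} for all $m\ge2$ $\Rightarrow$ \eqref{F-QM11}'', because one only needs $m=2$.

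For the converse I would induct on $m$, starting from the case $m=2$ just established. Assume \eqref{Ind_QM} holds for $\underline{\alpha}=(\alpha_1,\dots,\alpha_{m-1})$, and apply the operator $\hbar\downtriangle_{\alpha_m}^{\frac{S+\Gamma}{\hbar}}$ to both sides. The left side becomes $\mathbf{U}_{\alpha_1\cdots\alpha_m}$ by definition. Three facts about the operator are needed on the right side.
\begin{enumerate}
\item It is a derivation relative to $\mathbb{C}[[\ud t]][[\hbar]]$-coefficients: $\hbar\downtriangle_{\alpha}(f\,w)=\hbar\partial_\alpha(f)\,w+f\,\hbar\downtriangle_\alpha w$ for scalar $f$.
\item It sends $\Gamma_\delta$ to $\mathbf{U}_{\delta\alpha_m}$.
\item It commutes with $\delta_{S+\Gamma}+\hbar\Delta$.
\end{enumerate}

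The third fact is the key point. Conjugation gives $e^{-(S+\Gamma)/\hbar}(\delta_{S+\Gamma}+\hbar\Delta)e^{(S+\Gamma)/\hbar}=\hbar\Delta$ on $\cB[[\ud t]]((\hbar))$. This holds because $\Delta(e^{f/\hbar}w)=e^{f/\hbar}(\Delta w+\hbar^{-1}\{f,w\})$ for $f$ of degree $0$, using that $\Delta(f)=0$ on $\cB^0$ (since $\Delta$ kills functions). Since $\partial_\alpha$ commutes with $\hbar\Delta$, the conjugated operator $\downtriangle_\alpha$ commutes with $\delta_{S+\Gamma}+\hbar\Delta$. Granting these facts, we get
\[
\mathbf{U}_{\alpha_1\cdots\alpha_m}=\sum_\rho\big(\hbar\partial_{\alpha_m}\mathbf{B}_{\alpha_1\cdots\alpha_{m-1}}{}^\rho\big)\Gamma_\rho+\sum_\delta\mathbf{B}_{\alpha_1\cdots\alpha_{m-1}}{}^\delta\,\mathbf{U}_{\delta\alpha_m}+(\delta_{S+\Gamma}+\hbar\Delta)\big(\hbar\downtriangle_{\alpha_m}\mathbf{C}_{\alpha_1\cdots\alpha_{m-1}}\big).
\]
Next substitute the case $m=2$ for $\mathbf{U}_{\delta\alpha_m}$, namely $\mathbf{U}_{\delta\alpha_m}=\sum_\rho A_{\delta\alpha_m}{}^\rho\Gamma_\rho+(\delta_{S+\Gamma}+\hbar\Delta)\Lambda_{\delta\alpha_m}$. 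Collecting the terms, the coefficient of $\Gamma_\rho$ is exactly the recursive definition of $\mathbf{B}_{\alpha_1\cdots\alpha_m}{}^\rho$, and the argument of $\delta_{S+\Gamma}+\hbar\Delta$ is exactly the recursive definition of $\mathbf{C}_{\alpha_1\cdots\alpha_m}$, where $\mathbf{B}_{\delta\alpha_m}=A_{\delta\alpha_m}$ and $\mathbf{C}_{\delta\alpha_m}=\Lambda_{\delta\alpha_m}$. Linearity of $\delta_{S+\Gamma}+\hbar\Delta$ over scalars in $\ud t,\hbar$ is used to pull $\mathbf{B}$ inside. This closes the induction.

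The main obstacle is the commutation fact (3), which must be verified carefully on $\cB^{-1}[[\ud t]][[\hbar]]$ with signs. The cleanest route is the conjugation identity above, using that $\delta_{S+\Gamma}$ depends on $\ud t$ only through $\{\Gamma,-\}$. This can be checked directly by computing $[\partial_\alpha,\delta_{S+\Gamma}]=\{\Gamma_\alpha,-\}$ and $[\Gamma_\alpha\cdot,\hbar\Delta]=-\{\Gamma_\alpha,-\}$, the latter from the BV identity $\ell_2^\Delta$ together with $\Delta(\Gamma_\alpha)=0$. The two contributions cancel after the factor $\hbar^{-1}\cdot\hbar$, and everything else in the argument is bookkeeping.
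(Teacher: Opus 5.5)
Your proposal is correct and is essentially the paper's proof. The case $m=2$ of \eqref{Ind_QM} is \eqref{F-QM11} once the $\hbar^0$ and $\hbar^1$ parts are separated. The induction then applies $\hbar\downtriangle_{\alpha_m}$, uses the Leibniz rule, substitutes the $m=2$ relation for $\mathbf{U}_{\delta\alpha_m}$, and moves $\hbar\downtriangle_{\alpha_m}$ past $\delta_{S+\Gamma}+\hbar\Delta$; the paper simply takes this last commutation as known.

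There is one slip in your justification of that commutation. Your conjugation identity is false as written: its left side equals $2\delta_{S+\Gamma}+\hbar\Delta$. The correct form is $e^{-(S+\Gamma)/\hbar}\,\hbar\Delta\,e^{(S+\Gamma)/\hbar}=\delta_{S+\Gamma}+\hbar\Delta$, as your own formula for $\Delta(e^{f/\hbar}w)$ shows. It is this form, which uses the same conjugation as $\downtriangle_\alpha=e^{-(S+\Gamma)/\hbar}\partial_\alpha e^{(S+\Gamma)/\hbar}$, that gives the commutation. Likewise, in your direct check the commutator should read $[\Gamma_\alpha\cdot,\hbar\Delta]=-\hbar\{\Gamma_\alpha,-\}$, and with that correction the cancellation goes through.
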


\begin{proof}
	It is trivial that the above inductive equations imply the equation \eqref{F-QM11} because the $m=2$ case of \eqref{Ind_QM} is same as \eqref{F-QM11}.\par
	Suppose that the equation \eqref{F-QM11} holds for all $\alpha, \beta\in I$. Therefore, the inductive equations hold for all $\underline{\alpha}\in I^2$. Assume that the inductive equations hold for all $\underline{\alpha}\in I^\ell$, i.e.,
	\begin{equation}\label{ind_ell}
		\mathbf{U}_{\alpha_1\cdots\alpha_\ell}=\sum_{\rho\in I}{\mathbf{B}_{\alpha_1\cdots\alpha_\ell}}^\rho\Gamma_\rho+(\delta_{S+\Gamma}+\hbar\Delta)(\mathbf{C}_{\alpha_1\cdots\alpha_\ell}),
	\end{equation}
	for all $(\alpha_1,\dots,\alpha_\ell)\in I^\ell$. Take $\hbar\downtriangle_{\alpha_{\ell+1}}^{\frac{S+\Gamma}{\hbar}}$ on the equation \eqref{ind_ell}. Then we have the following equations:
	\begin{align*}
		\hbar\downtriangle_{\alpha_{\ell+1}}^{\frac{S+\Gamma}{\hbar}}\mathbf{U}_{\alpha_1\cdots\alpha_\ell}=&\sum_{\rho\in I}\hbar\downtriangle_{\alpha_{\ell+1}}^{\frac{S+\Gamma}{\hbar}}({\mathbf{B}_{\alpha_1\cdots\alpha_\ell}}^\rho\Gamma_\rho)+(\delta_{S+\Gamma}+\hbar\Delta)(\hbar\downtriangle_{\alpha_{\ell+1}}^{\frac{S+\Gamma}{\hbar}}\mathbf{C}_{\alpha_1\cdots\alpha_\ell}),\\
		\mathbf{U}_{\alpha_1\cdots\alpha_{\ell+1}}=&\sum_{\rho\in I}\Big[\hbar\partial_{\alpha_{\ell+1}}{\mathbf{B}_{\alpha_1\cdots\alpha_\ell}}^\rho\Gamma_\rho+{\mathbf{B}_{\alpha_1\cdots\alpha_\ell}}^\rho\hbar\downtriangle_{\alpha_{\ell+1}}^{\frac{S+\Gamma}{\hbar}}\hbar\downtriangle_{\rho}^{\frac{S+\Gamma}{\hbar}}1\Big]\\
		&\hspace{5em}+(\delta_{S+\Gamma}+\hbar\Delta)(\hbar\downtriangle_{\alpha_{\ell+1}}^{\frac{S+\Gamma}{\hbar}}\mathbf{C}_{\alpha_1\cdots\alpha_\ell})\\
		=\sum_{\rho\in I}\bigg[\hbar&\partial_{\alpha_{\ell+1}}{\mathbf{B}_{\alpha_1\cdots\alpha_\ell}}^\rho\Gamma_\rho+{\mathbf{B}_{\alpha_1\cdots\alpha_\ell}}^\rho\Big[\sum_{\delta\in I}A_{\alpha_{\ell+1}\rho}{}^\delta\Gamma_\delta+(\delta_{S+\Gamma}+\hbar\Delta)(\Lambda_{\alpha_{\ell+1}\rho})\Big]\bigg]\\
		&\hspace{5em}+(\delta_{S+\Gamma}+\hbar\Delta)(\hbar\downtriangle_{\alpha_{\ell+1}}^{\frac{S+\Gamma}{\hbar}}\mathbf{C}_{\alpha_1\cdots\alpha_\ell})\\
		=\sum_{\rho\in I}\bigg[\hbar&\partial_{\alpha_{\ell+1}}{\mathbf{B}_{\alpha_1\cdots\alpha_\ell}}^\rho+\sum_{\delta\in I}\mathbf{B}_{\alpha_1\cdots\alpha_\ell}{}^\delta A_{\alpha_{\ell+1}\delta}{}^\rho\bigg]\Gamma_\rho\\
		&\hspace{5em}+(\delta_{S+\Gamma}+\hbar\Delta)(\sum_{\delta\in I}\mathbf{B}_{\alpha_1\cdots\alpha_\ell}{}^\delta\Lambda_{\alpha_{\ell+1}\delta}+\hbar\downtriangle_{\alpha_{\ell+1}}^{\frac{S+\Gamma}{\hbar}}\mathbf{C}_{\alpha_1\cdots\alpha_\ell}).
	\end{align*}
	Recall that $\hbar\downtriangle_{\alpha}^{\frac{S+\Gamma}{\hbar}}$ and $\delta_{S+\Gamma}+\hbar\Delta$ commute. The last equation implies
	\[
		\mathbf{U}_{\alpha_1\cdots\alpha_{\ell+1}}=\sum_{\rho\in I}{\mathbf{B}_{\alpha_1\cdots\alpha_{\ell+1}}}^\rho\Gamma_\rho+(\delta_{S+\Gamma}+\hbar\Delta)(\mathbf{C}_{\alpha_1\cdots\alpha_{\ell+1}})
	\]
	by the definitions of $\mathbf{U},\mathbf{B}$, and $\mathbf{C}$. By mathematical induction, the result follows.
\end{proof}

By a direct computation, we get the following lemma:
\begin{lemma}
	For $\underline{\alpha}\in I^m$, we have the following formula:
	\[
		\mathbf{U}_{\underline{\alpha}}=\sum_{i=0}^{m-1}{}^iU_{\underline{\alpha}}\hbar^i,\quad\textrm{where}\quad{}^iU_{\underline{\alpha}}=\sum_{\substack{\underline{\alpha}_1\sqcup\cdots\sqcup \underline{\alpha}_{m-i}=\underline{\alpha}\\\underline{\alpha}_j\neq\emptyset}}\frac{1}{(m-i)!}\Gamma_{\underline{\alpha}_1}\cdots\Gamma_{\underline{\alpha}_{m-i}},
	\]
	where $\underline{\alpha}_1\sqcup\cdots\sqcup \underline{\alpha}_{m-i}=\underline{\alpha}$ means $\underline{\alpha}_1\cup\cdots\cup \underline{\alpha}_{m-i}=\underline{\alpha}$, and $\underline{\alpha}_k\cap \underline{\alpha}_\ell=\emptyset$ for $\ell\neq k$.
\end{lemma}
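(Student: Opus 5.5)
We argue by induction on $m=|\underline{\alpha}|$, using only the defining formula $\hbar\downtriangle_\alpha^{\frac{S+\Gamma}{\hbar}} w=\hbar\partial_\alpha w+\Gamma_\alpha w$. This formula holds because $\partial_\alpha S=0$, since $S$ is independent of $\underline{t}$. Note that $\mathbf{U}_{\underline{\alpha}}$ is computed at the chain level in $\cB^0[\![\underline{t}]\!][\![\hbar]\!]$; no passage to cohomology is involved. The base case $m=1$ is immediate: $\mathbf{U}_{\alpha}=\hbar\downtriangle_\alpha 1=\Gamma_\alpha$. This agrees with ${}^0U_{\alpha}=\Gamma_\alpha$, the sum over partitions of a one-element set into one nonempty block, with the factor $1/1!$.

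Before the inductive step we reinterpret the factor $\frac{1}{(m-i)!}$. The sum runs over ordered tuples $(\underline{\alpha}_1,\dots,\underline{\alpha}_{m-i})$ of disjoint nonempty blocks whose union is $\underline{\alpha}$, where $\underline{\alpha}$ is regarded as a set of $m$ labelled positions. The blocks are nonempty and disjoint, so they are distinct. Hence every unordered set partition into $k=m-i$ blocks appears exactly $k!$ times, and the product $\Gamma_{\underline{\alpha}_1}\cdots\Gamma_{\underline{\alpha}_k}$ is unchanged under reordering, since all factors have degree $0$ and commute. Therefore
\[
{}^iU_{\underline{\alpha}}=\sum_{\pi\in\Pi_{m-i}(\underline{\alpha})}\prod_{P\in\pi}\Gamma_{P},
\]
where $\Pi_k(\underline{\alpha})$ denotes the set of unordered partitions of $\underline{\alpha}$ into $k$ blocks. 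Thus the claim becomes $\mathbf{U}_{\underline{\alpha}}=\sum_{\pi}\hbar^{m-|\pi|}\prod_{P\in\pi}\Gamma_P$, summed over all set partitions $\pi$ of $\underline{\alpha}$. Since $\partial$'s commute, $\Gamma_P$ depends only on the set $P$.

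For the inductive step, set $\underline{\alpha}'=\underline{\alpha}\cup\{\alpha_{m+1}\}$ and apply $\hbar\downtriangle_{\alpha_{m+1}}=\hbar\partial_{\alpha_{m+1}}+\Gamma_{\alpha_{m+1}}$ to the formula for $\mathbf{U}_{\underline{\alpha}}$. The two terms produce two kinds of partitions of $\underline{\alpha}'$.
\begin{itemize}
\item Multiplying by $\Gamma_{\alpha_{m+1}}$ sends the term of a partition $\pi$ to the term of $\pi\cup\{\{\alpha_{m+1}\}\}$. This raises the block count by one and keeps the $\hbar$-exponent $(m+1)-(|\pi|+1)=m-|\pi|$.
\item Applying $\hbar\partial_{\alpha_{m+1}}$ to $\prod_{P\in\pi}\Gamma_P$ uses the Leibniz rule to sum over the blocks $P$ into which $\alpha_{m+1}$ is inserted, turning $\Gamma_P$ into $\Gamma_{P\cup\{\alpha_{m+1}\}}$. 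This keeps the block count and raises the $\hbar$-exponent by one, matching $(m+1)-|\pi|$.
\end{itemize}
Every partition of $\underline{\alpha}'$ arises exactly once in this way, according to whether $\alpha_{m+1}$ is a singleton block or not. Removing $\alpha_{m+1}$ gives the inverse bijection. This is the standard recursion for set partitions, and it yields the formula for $m+1$.

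Finally, the $\hbar$-exponent $m-|\pi|$ runs over $0,\dots,m-1$, which gives the stated range of $i$. The only point requiring care is the bookkeeping between ordered tuples with weight $1/(m-i)!$ and unordered partitions when repeated indices occur in $\underline{\alpha}$. We handle this by treating the entries of $\underline{\alpha}$ as labelled positions, exactly as in Definition \ref{u_alpha}, so that repeated indices create no ambiguity. The rest is routine.
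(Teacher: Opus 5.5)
Your proof is correct and matches the paper's approach: the induction based on $\hbar\downtriangle_\alpha^{\frac{S+\Gamma}{\hbar}} w=\hbar\partial_\alpha w+\Gamma_\alpha w$, combined with the standard recursion for set partitions (entries of $\underline{\alpha}$ treated as labelled positions), is exactly the ``direct computation'' the paper cites. The paper writes out only the cases $m=3,4$, so your argument supplies the omitted details.
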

For example,
\begin{align*}
	\mathbf{U}_{\alpha\beta\gamma}=&\Gamma_\alpha\Gamma_\beta\Gamma_\gamma+\hbar(\Gamma_\alpha\Gamma_{\beta\gamma}+\Gamma_{\beta}\Gamma_{\alpha\gamma}+\Gamma_\gamma\Gamma_{\alpha\beta})+\hbar^2\Gamma_{\alpha\beta\gamma},\\
	\mathbf{U}_{\alpha\beta\gamma\delta}=&\Gamma_\alpha\Gamma_\beta\Gamma_\gamma\Gamma_\delta\\
	&+\hbar(\Gamma_{\alpha\beta}\Gamma_{\gamma}\Gamma_{\delta}+\Gamma_{\alpha\gamma}\Gamma_{\beta}\Gamma_{\delta}+\Gamma_{\alpha\delta}\Gamma_{\beta}\Gamma_{\gamma}+\Gamma_{\beta\gamma}\Gamma_{\alpha}\Gamma_{\delta}+\Gamma_{\beta\delta}\Gamma_{\alpha}\Gamma_{\gamma}+\Gamma_{\gamma\delta}\Gamma_{\alpha}\Gamma_{\beta})\\
	&+\hbar^2(\Gamma_{\alpha\beta}\Gamma_{\gamma\delta}+\Gamma_{\alpha\gamma}\Gamma_{\beta\delta}+\Gamma_{\alpha\delta}\Gamma_{\beta\gamma}+\Gamma_\alpha\Gamma_{\beta\gamma\delta}+\Gamma_{\beta}\Gamma_{\alpha\gamma\delta}+\Gamma_\gamma\Gamma_{\alpha\beta\delta}+\Gamma_{\delta}\Gamma_{\alpha\beta\gamma})\\
	&+\hbar^3\Gamma_{\alpha\beta\gamma\delta}.
\end{align*}
Therefore, we get the following result:
\begin{equation}\label{rel_U-u}
	{}^iU_{\underline{\alpha}}|_{\underline{t}=0}=u^{(i)}_{\underline{\alpha}}=\sum_{\substack{\underline{\alpha}_1\sqcup\cdots\sqcup \underline{\alpha}_{m-i}=\underline{\alpha}\\\underline{\alpha}_j\neq\emptyset}}\frac{1}{(m-i)!}u_{\underline{\alpha}_1}\cdots u_{\underline{\alpha}_{m-i}},
\end{equation}
where $u_{\underline{\alpha}}^{(i)}$ appeared in Definition \ref{u_alpha}.\par
By the definitions of $\mathbf{B}$ and $\mathbf{C}$, the $\hbar$-degree of $\mathbf{B}_{\underline{\alpha}}{}^\rho$ and $\mathbf{C}_{\underline{\alpha}}$ is $m-2$ when $\underline{\alpha}\in I^m$, i.e., we can write
\[
	\mathbf{B}_{\underline{\alpha}}{}^\rho=\sum_{i=0}^{m-2}{}^iB_{\underline{\alpha}}{}^\rho\hbar^i,\quad\mathbf{C}_{\underline{\alpha}}=\sum_{i=0}^{m-2}{}^iC_{\underline{\alpha}}\hbar^i,
\]
for some ${}^iB_{\underline{\alpha}}{}^\rho\in\mathbb{C}[\![\underline{t}]\!]$, and ${}^iC_{\underline{\alpha}}\in\cB^{-1}[\![\underline{t}]\!]$. Therefore, by comparing the $\hbar$-power terms of \eqref{Ind_QM}, we get the following sequence of equations:
\begin{equation}\label{IND_U}
	\begin{aligned}
		{}^0U_{\underline{\alpha}}&=\sum_{\rho\in I}{}^0B_{\underline{\alpha}}{}^\rho\Gamma_\rho+\delta_{S+\Gamma}({}^0C_{\underline{\alpha}}),\\
		{}^1U_{\underline{\alpha}}&=\sum_{\rho\in I}{}^1B_{\underline{\alpha}}{}^\rho\Gamma_\rho+\delta_{S+\Gamma}({}^1C_{\underline{\alpha}})+\Delta({}^0C_{\underline{\alpha}}),\\
		&\quad\vdots\\
		{}^{m-2}U_{\underline{\alpha}}&=\sum_{\rho\in I}{}^{m-2}B_{\underline{\alpha}}{}^\rho\Gamma_\rho+\delta_{S+\Gamma}({}^{m-2}C_{\underline{\alpha}})+\Delta({}^{m-3}C_{\underline{\alpha}}),\\
		{}^{m-1}U_{\underline{\alpha}}&=\Delta({}^{m-2}C_{\underline{\alpha}}).
	\end{aligned}
\end{equation}

\begin{lemma}\label{Lemm_B}
	The constant term of $\underline{t}^{\ud \a}$-expansion of ${}^{m-2}B_{\underline{\alpha}}{}^\rho$ is $a_{\underline{\alpha}}{}^\rho$ which appeared in \eqref{Exp_notation}, i.e.,
	\[
		{}^{m-2}B_{\underline{\alpha}}{}^\rho|_{\underline{t}=0}=a_{\underline{\alpha}}{}^\rho.
	\]
\end{lemma}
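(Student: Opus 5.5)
The plan is to track only the top $\hbar$-coefficient of $\mathbf{B}_{\underline{\alpha}}{}^\rho$ through the recursion in Definition \ref{Def_UBC}. We show by induction on $m$ that it is an iterated $t$-derivative of $A_{\alpha_1\alpha_2}{}^\rho$, and then read off its value at $\underline{t}=0$ from the expansion \eqref{Exp_notation}.

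\emph{Step 1 (recursion for the top coefficient).} Write $\underline{\alpha}=(\alpha_1,\dots,\alpha_m)$. For $m=2$ we have $\mathbf{B}_{\alpha_1\alpha_2}{}^\rho=A_{\alpha_1\alpha_2}{}^\rho$, which has no $\hbar$, so ${}^0B_{\alpha_1\alpha_2}{}^\rho=A_{\alpha_1\alpha_2}{}^\rho$. For $m\geq 3$ the recursion reads
\[
\mathbf{B}_{\alpha_1\cdots\alpha_m}{}^\rho=\sum_{\delta\in I}\mathbf{B}_{\alpha_1\cdots\alpha_{m-1}}{}^\delta A_{\delta\alpha_m}{}^\rho+\hbar\,\partial_{\alpha_m}\big(\mathbf{B}_{\alpha_1\cdots\alpha_{m-1}}{}^\rho\big).
\]
By the degree count already noted, $\mathbf{B}_{\alpha_1\cdots\alpha_{m-1}}{}^\delta$ has $\hbar$-degree at most $m-3$. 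Since $A_{\delta\alpha_m}{}^\rho$ is free of $\hbar$, the first sum also has $\hbar$-degree at most $m-3$ and contributes nothing to $\hbar^{m-2}$. Hence
\[
{}^{m-2}B_{\alpha_1\cdots\alpha_m}{}^\rho=\partial_{\alpha_m}\big({}^{m-3}B_{\alpha_1\cdots\alpha_{m-1}}{}^\rho\big),
\]
and by induction ${}^{m-2}B_{\alpha_1\cdots\alpha_m}{}^\rho=\partial_{\alpha_m}\cdots\partial_{\alpha_3}A_{\alpha_1\alpha_2}{}^\rho$.

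\emph{Step 2 (evaluation at $\underline{t}=0$).} For $m=2$ the claim is just $A_{\alpha_1\alpha_2}{}^\rho|_{\underline{t}=0}=a_{\alpha_1\alpha_2}{}^\rho$. For $m\geq 3$, put $k=m-2$ and apply $\partial_{\alpha_3}\cdots\partial_{\alpha_m}$ to
\[
A_{\alpha_1\alpha_2}{}^\rho=a_{\alpha_1\alpha_2}{}^\rho+\sum_{j\geq1}\sum_{\underline{\beta}\in I^j}\frac{1}{j!}a_{\alpha_1\alpha_2\underline{\beta}}{}^\rho t^{\underline{\beta}}.
\]
After setting $\underline{t}=0$, only the terms with $j=k$ survive. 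Among these, $\partial_{\alpha_3}\cdots\partial_{\alpha_m}t^{\underline{\beta}}$ at $\underline{t}=0$ counts the bijections between the positions of $\underline{\beta}$ and those of $(\alpha_3,\dots,\alpha_m)$ that match labels. Summing over the ordered tuples $\underline{\beta}$ that are rearrangements of $(\alpha_3,\dots,\alpha_m)$, every bijection of the $k$ positions arises exactly once, so there are $k!$ contributions in total. By the standing assumption, $a_{\alpha_1\alpha_2\underline{\beta}}{}^\rho$ is invariant under permutations of $\underline{\beta}$, so each contribution equals $a_{\alpha_1\alpha_2\alpha_3\cdots\alpha_m}{}^\rho$. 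The total is therefore $k!\cdot\frac{1}{k!}\,a_{\alpha_1\cdots\alpha_m}{}^\rho=a_{\underline{\alpha}}{}^\rho$.

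The only delicate point is the combinatorics in Step 2: the $\frac{1}{k!}$ normalization must exactly cancel the count of ordered rearrangements, including when indices repeat. This is why the symmetry assumption on $a_{\alpha\beta\underline{\alpha}}{}^\rho$ is used. Step 1 is a pure degree argument.
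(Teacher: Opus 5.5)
Your proof is correct and follows the paper's argument: the paper also proves by induction, using the same $\hbar$-degree observation, that ${}^{m-2}B_{\underline{\alpha}}{}^\rho=\partial_{\alpha_m}\cdots\partial_{\alpha_3}A_{\alpha_1\alpha_2}{}^\rho$. Your Step 2 evaluates this at $\underline{t}=0$ using the $\frac{1}{k!}$-normalized symmetric expansion, which the paper leaves implicit in its phrase ``it is enough to show''.
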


\begin{proof}
	It is enough to show that
	\begin{equation}\label{Eqn_B}
		{}^{m-2}B_{\underline{\alpha}}{}^\rho=\partial_{\alpha_m}\partial_{\alpha_{m-1}}\cdots\partial_{\alpha_3}A_{\alpha_1\alpha_2}{}^\rho,
	\end{equation}
	where $\underline{\alpha}=\alpha_1\alpha_2\cdots\alpha_m\in I^m$ for $m\geq 2$. Use the mathematical induction. When $m=2$, $\mathbf{B}_{\alpha_1\alpha_2}{}^\rho=A_{\alpha_1\alpha_2}{}^\rho$ by Definition \ref{Def_UBC}, which implies that ${}^0B_{\alpha_1\alpha_2}{}^\rho=A_{\alpha_1\alpha_2}{}^\rho$.\par
Suppose that equation \eqref{Eqn_B} is true for $|\underline{\alpha}|=\ell$. By Definition \ref{Def_UBC},
	\[
		{\mathbf{B}_{\alpha_1\cdots\alpha_{\ell+1}}}^\rho=\sum_{\delta\in I}{\mathbf{B}_{\alpha_1\cdots\alpha_{\ell}}}^\delta{\mathbf{B}_{\delta\alpha_{\ell+1}}}^\rho+\hbar\cdot\partial_{\alpha_{\ell+1}}({\mathbf{B}_{\alpha_1\cdots\alpha_{\ell}}}^\rho).
	\]
	Therefore, the $\hbar^{\ell-1}$-term of $\mathbf{B}_{\alpha_1\cdots\alpha_{\ell+1}}{}^\rho$ is $\partial_{\alpha_{\ell+1}}({}^{\ell-2}B_{\alpha_1\cdots\alpha_\ell}{}^\rho)=\partial_{\alpha_{\ell+1}}\cdots\partial_{\alpha_3}A_{\alpha_1\alpha_2}{}^\rho$.
\end{proof}
According to Lemma \ref{Lemm_B} and equation \eqref{rel_U-u}, we get equations \eqref{ind_u} by evaluating $\underline{t}=0$ on equations \eqref{IND_U}. This implies that the algorithm in Section \ref{sec3.4} works to give a solution to \eqref{F-QM11}.

\section{Comparison with Li-Li-Saito's algorithm}

It is easy to see that equations \eqref{WPF} and \eqref{F-QM11} imply the following proposition.
\begin{proposition}
If the flat $F$-manifold $(J_S, A_{\a\b}{}^{\g}(\ud t))$ constructed in Section \ref{sec3.4} can be extended to a formal Frobenius manifold $(J_S, A_{\a\b}{}^{\g}(\ud t), g_{\a\b})$, then $(J_S, A_{\a\b}{}^{\g}(\ud t), g_{\a\b})$ is associated with (as in \cite{ST}) a primitive form $[1] \in J_S$.
\end{proposition}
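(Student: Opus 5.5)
The proof amounts to checking that $\boldsymbol\zeta=1$ together with $\Gamma$ and $A_{\alpha\beta}{}^\rho$ from Section \ref{sec3.4} satisfies the axioms Saito--Takahashi \cite{ST} impose on a primitive form. Their construction attaches to a primitive form a Frobenius manifold as follows: the product comes from the $\hbar^0$-part of the connection matrix in (S2), the flat structure from the $\hbar^1$-part, and the metric from the (higher residue) pairing. So it suffices to show that each of these axioms holds for $1$ and that each ingredient of the associated structure coincides with the given datum $(J_S, A_{\alpha\beta}{}^\gamma(\ud t), g_{\alpha\beta})$. I would take the axioms in the order primitivity, the equation (S2), the metric condition.

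\textbf{Primitivity.} The elements $\hbar\downtriangle_\alpha^{\frac{S+\Gamma}{\hbar}}1=\partial_\alpha\Gamma=\Gamma_\alpha$ should form a $\C[[\ud t]][[\hbar]]$-basis of $\mathcal{H}^{(0)}_{S+\Gamma}$. At $\ud t=0$ and $\hbar=0$ they reduce to the classes $[u_\alpha]$, which form a $\C$-basis of $J_S$. A formal Nakayama argument in the $(\ud t,\hbar)$-adic topology then gives the basis property, and this is exactly what was used in Section 3 to define the connection matrix.

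\textbf{Equation (S2).} The algorithm produces $\Gamma$, $A_{\alpha\beta}{}^\rho$ and $\Lambda_{\alpha\beta}$ with no $\hbar$-dependence satisfying \eqref{F-QM11}; this is justified by Theorem \ref{Thm_UBD} and Lemma \ref{Lemm_B}. Reassembling the two equations of \eqref{F-QM11} by $\hbar$-powers gives \eqref{F-QM}, which is \eqref{WPF} with $\boldsymbol\zeta=1$, ${}^0A=A$ and ${}^1A=0$. By Proposition \ref{wpfflat}, the flat $F$-manifold that \cite{ST} attach to $1$ is therefore the given one, with $\ud t$ as flat coordinates.

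\textbf{Metric condition (main obstacle).} This is the step where the hypothesis on $g$ enters. A Frobenius extension requires $\nabla^g$ to be the flat connection of the $F$-manifold, and that connection satisfies $\nabla_{\partial_\alpha}\partial_\beta=0$ in the coordinates $\ud t$. Hence $g_{\alpha\beta}$ is constant in $\ud t$, and it is invariant: $g(\partial_\alpha\circ\partial_\beta,\partial_\gamma)=g(\partial_\alpha,\partial_\beta\circ\partial_\gamma)$. In \cite{ST} the metric condition asks that the pairing of $\hbar\downtriangle_\alpha\boldsymbol\zeta$ with $\hbar\downtriangle_\beta\boldsymbol\zeta$ be a constant in $\ud t$, with no $\hbar$-corrections, and that it be compatible with (S2). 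I would define the pairing on the basis $\{\Gamma_\alpha\}$ by $g_{\alpha\beta}$ and extend it $\C[[\ud t]]((\hbar))$-sesquilinearly. Then I would check that the compatibility required in \cite{ST} holds: differentiating the pairing along $\partial_\gamma$ and substituting \eqref{F-QM}, this compatibility reduces to the identity
\[
\sum_\rho A_{\gamma\alpha}{}^\rho g_{\rho\beta}=\sum_\rho A_{\gamma\beta}{}^\rho g_{\alpha\rho}.
\]
This identity is precisely the invariance of $g$. The delicate point is matching conventions with \cite{ST}, in particular the sign of $\hbar$ in the second argument and whether the pairing is intrinsic (higher residue) or merely compatible. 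I would argue that, for the association to a primitive form, only flatness and the absence of $\hbar$-corrections of the pairing on the span of $\{\Gamma_\alpha\}$ are needed, and these follow from $\nabla^g=\nabla$. With all three axioms satisfied and all ingredients matching, $[1]$ is a primitive form whose associated Frobenius manifold is $(J_S, A_{\alpha\beta}{}^\gamma(\ud t), g_{\alpha\beta})$.
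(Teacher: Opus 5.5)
Your Steps 1 and 2 are exactly what the paper's one-line justification rests on. The algorithm's output satisfies \eqref{F-QM11}, which, reassembled by powers of $\hbar$, is \eqref{WPF} with $\boldsymbol\zeta=1$, ${}^0A=A$, ${}^1A=0$. The paper also uses that $\{\Gamma_\alpha\}$ is a basis of $\mathcal{H}^{(0)}_{S+\Gamma}$.

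\textbf{The gap is in Step 3.} In \cite{ST}, and in Saito's definition as used in \cite{LLS}, the orthogonality and holonomicity axioms are statements about the higher residue pairing $K_{S+\Gamma}$. That pairing is canonically attached to the family $S+\Gamma$; you cannot choose it. If you declare $\langle\Gamma_\alpha,\Gamma_\beta\rangle=g_{\alpha\beta}$ and extend sesquilinearly, the axioms hold by construction. They would hold equally for $c\,g$ with $c\in\C^\times$, or for any other constant invariant metric. So your reduction of compatibility to $\sum_\rho A_{\gamma\alpha}{}^\rho g_{\rho\beta}=\sum_\rho A_{\gamma\beta}{}^\rho g_{\alpha\rho}$, though correct, says nothing about $K_{S+\Gamma}$.

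What actually has to be proved is:
\begin{enumerate}
\item $K_{S+\Gamma}(\Gamma_\alpha,\Gamma_\beta)$ has no $\hbar^{\geq 1}$-terms;
\item its leading (residue pairing) term is constant in $\ud t$ and equal to $g_{\alpha\beta}$.
\end{enumerate}
Specializing to $\ud t=0$, where $\Gamma_\alpha=u_\alpha$, this already forces the higher residue pairings $K^{(k)}_S(u_\alpha,u_\beta)$, $k\geq 1$, of the input representatives to vanish. That is a $K$-isotropy condition on the opposite subspace spanned by the $u_\alpha$, hence a condition on the choice of basis. The mere existence of some flat invariant metric $g$ compatible with $(A,\nabla)$ does not imply it. Nor does it single out the residue pairing among all such metrics. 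Separately, Saito's definition also contains a homogeneity axiom with respect to an Euler field, which you do not address.

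The paper's own sentence does not fill this gap either. It tacitly reads ``associated with $[1]$'' as meaning only that the product and the flat structure arise from (S2) with $\boldsymbol\zeta=1$, with the metric supplied by the hypothesis.
\begin{itemize}
\item Under that weak reading, your Steps 1--2 already are the paper's proof, and Step 3 is superfluous.
\item Under the reading the statement literally invokes (``as in \cite{ST}''), all the content sits in the $K$-statement above. This is also the reading the paper needs when it combines the proposition with \cite[Proposition 6.3]{LLS} in the $E_6^{(1,1)}$ example.
\end{itemize}
Your Step 3 defines that $K$-statement away rather than proving it. Closing it would require an argument connecting the given $g$ with $K_{S+\Gamma}$, and the hypothesis as stated provides none.
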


We consider a simple elliptic singularity. Let 
\begin{equation}
    S(\ud x) = \frac{1}{3} (x_1^3+x_2^3+x_3^3).
\end{equation}
Note that $S(\ud x)$ is a weight homogeneous polynomial, called a simple elliptic singularity of type $E_6^{(1,1)}$. By \cite[Proposition 6.3]{LLS}, a primitive form can not be [1] and, thus the formal flat $F$-manifold obtained from the formal Frobenius manifold (associated with the primitive form $[1]$) constructed in \cite[Proposition 6.3]{LLS}, by forgetting the metric, is different from the formal $F$-manifold constructed in this article.
This indicates that our algorithm is different from that of Li-Li-Saito (\cite{LLS} and \cite{LLSS}). We find that it seems difficult to extend our algorithm to construct formal Frobenius manifolds.



\begin{thebibliography}{99}

\bibitem{BK}
Barannikov, S.; Kontsevich, M.:\newblock{\it Frobenius manifolds and formality of Lie algebras of polyvector fields}, \newblock Internat. Math. Res. Notices {\bf 4} (1998) 201--215.


\bibitem{CS99} Cox, David A.; Katz, Sheldon:{\it Mirror symmetry and algebraic geometry.} Mathematical Surveys and Monographs, 68. American Mathematical Society, Providence, RI, 1999. xxii+469 pp.

\bibitem{Dim95} Dimca, A.:\newblock{\it Residues and cohomology of complete intersections}, \newblock Duke Math. J., 78 (1995) no. 1, 89--100.

\bibitem{Dubrovin} Dubrovin,  B. A.:\newblock{\it Geometry of $2$D topological field theories}, \newblock In  Integrable systems and quantum groups (Montecatini Terme, 1993), Lecture Notes in Math.\ vol.\ 1620, 120--348. Springer-Verlag, Berlin (1996).


\bibitem{Gr69} Griffiths, Phillip A.: \newblock{\it  On the periods of certain rational integrals. I, II}, \newblock Ann. of Math. (2) 90 (1969), 460--495; ibid. (2) 90 (1969), 496--541. 

\bibitem{Her02} Hertling, Claus: {\it Frobenius manifolds and moduli spaces for singularities.} Cambridge Tracts in Mathematics, 151. Cambridge University Press, Cambridge, 2002. x+270 pp. 

\bibitem{HM} Hertling, C.; Manin, Y.:\newblock{\it Weak Frobenius Manifolds}, \newblock Internat. Math. Res. Notices {\bf 6} (1999) 277--286.



\bibitem{Ko91}
K. Konno:  \emph{On the variational Torelli problem for complete intersections}, Comp. Math., 78 (1991), 271--296.

\bibitem{LLS}
Li, Changzheng; Li, Si; Saito, Kyoji: 
{\it Primitive forms via polyvector fields,} available at https://arxiv.org/abs/1311.1659

\bibitem{LLSS}
Li, Changzheng; Li, Si; Saito, Kyoji; Shen, Yefeng: 
{\it Mirror symmetry for exceptional unimodular singularities,} J. Eur. Math. Soc. 9, 1189--1229.

\bibitem{Ma05} Manin, Y.: {\it $F$-manifolds with flat structure and Dubrovin's duality.} Adv. Math. 198 (2005), no. 1, 5--26.






\bibitem{Saito}
Saito, K.:
\newblock{\it Primitive forms for an universal unfolding of a functions with isolated
critical point},
\newblock Journ.\ Fac.\ Sci.\ Univ.\ Tokyo, Sect. IA Math.  {\bf 28} no.3 (1981) 777--792. 



\bibitem{MSaito}
Saito, M.:
\newblock{\it On the structure of Brieskorn lattice},
Ann. Inst. Fourier (Grenoble) {\bf 39} (1989), no.1, 27--72.

\bibitem{ST}
Saito, K.; Takahashi, A.:
\newblock{\it From primitive forms to Frobenius manifolds,}
From Hodge theory to integrability and TQFT $tt^*$-geometry, 31--48, Proc. Sympos. Pure Math., 78, Amer. Math. Soc., Providence, RI, 2008.



\end{thebibliography}
\end{document}